\newcommand{\JJoin}{\!\!\Join\!\!}
\newcommand{\inc}{{\rm \tt I}}
\def\<{\langle}
\def\>{\rangle}
\def\pmod#1{\allowbreak\mkern5mu(\mathrm{mod}\,\,#1)}
\newfont\recht{cmr12}
\def\cS{{\cal S}}
\def\cP{{\cal P}}
\def\cL{{\cal L}}
\def\N{{\Bbb N}}
\newtheorem{lemma}{Lemma}[section]
\newtheorem{theo}[lemma]{Theorem}
\newenvironment{proof}{\noindent {\sc Proof.}}{\hfill$\Box$

\bigskip

}
\newcommand{\alt}[1]{{\sf A}_{#1}}
\newcommand{\mat}[1]{{\sf M}_{#1}}
\newcommand{\sy}[1]{{\sf S}_{#1}}
\newcommand{\soc}{\operatorname{Soc}}
\newcommand{\sym}[1]{{\sf Sym}\,#1}
\newcommand{\Alt}[1]{{\sf Alt}\,#1}
\renewcommand{\wr}{\,{\sf wr}\,}
\newcommand{\hol}{{\sf Hol}\,}
\newcommand{\cent}[2]{\mathbb C_{#1}(#2)}
\newcommand{\norm}[2]{\mathbb N_{#1}(#2)}
\newcommand{\aut}[1]{{\sf Aut}\,{#1}}
\renewcommand{\leq}{\leqslant}
\renewcommand{\geq}{\geqslant}
\begin{document}
\title{\bf Primitive flag-transitive generalized hexagons and octagons}
\author{{\sc Csaba Schneider}\\
{Informatics Research Laboratory}\\
Computer and Automation Research Institute\\
1518 Budapest Pf.\ 63 Hungary\\
Email: csaba.schneider@sztaki.hu\\
WWW: www.sztaki.hu/$\sim$schneider\\\\
and
\\\\
{\sc Hendrik Van Maldeghem}\\
Department of Pure Mathematics and Computer Algebra\\
Ghent University\\
Krijgslaan 281, S22
B-9000 Ghent, Belgium\\
Email: hvm@cage.ugent.be\\
WWW: cage.ugent.be/$\sim$hvm}
\date{\today}
\maketitle
\begin{abstract}
Suppose that an automorphism group $G$ acts flag-transitively on a finite generalized hexagon or octagon $\cS$, and
suppose that the action on both the point and line set is primitive. We show that $G$ is an almost simple group
of Lie type, that is, the socle of $G$ is a simple Chevalley group.
\end{abstract}

\section{Introduction}

The classification of all finite flag-transitive generalized polygons is a long-standing important open problem
in finite geometry. Generalized polygons, introduced by Tits in \cite{Tit:59}, are among the most notable and
prominent examples of discrete geometries, they have a lot of applications and are the building bricks of the
Tits buildings. The determination of all finite flag-transitive examples would have a great impact on many
problems, not in the least because of a significant weakening of the hypotheses of many results.  It is
generally considered as an ``NC-hard'' problem, where NC stands for ``No Classification (of finite simple
groups allowed)''. By a result of Feit and Higman \cite{Fei-Hig:64}, we must only consider generalized
triangles (which are the projective planes), generalized quadrangles, generalized hexagons and generalized
octagons. In each case there are nontrivial examples of finite flag transitive geometries, and it is believed
that we know all of them. The most far reaching results are known for the class of projective planes, where the
only counterexamples would have a sharply transitive group on the flags, and the number of points must be a
prime number, see \cite{Kan:83}. For generalized quadrangles, besides the well known classical (and dual
classical, in the terminology of \cite{Pay-Tha:84}) cases there are exactly two other examples, both arising
from transitive hyperovals in Desarguesian projective planes, namely of respective order 4 and 16 (and the
hyperovals are the regular one and the Lunelli-Sce hyperoval, respectively). Both quadrangles have an affine
representation, that is, their point set can be identified with the point set of a $3$-dimensional affine space,
and the line set is the union of some parallel classes of lines of that space (more precisely, those parallel
classes of lines that define the corresponding hyperoval in the plane at infinity). For generalized hexagons
and octagons, only the classical (Moufang) examples are known to exist, and the conjecture is that they are the
only flag-transitive ones (and some even conjecture that they are the only finite ones!). An affine
construction similar to the one above for quadrangles can never lead to a generalized hexagon or octagon, and
this observation easily leads to the nonexistence of generalized hexagons and octagons admitting a primitive
point-transitive group and whose O'Nan-Scott type is HA (see below for precise definitions).

This observation is the starting point of the present paper. Since the classification of finite flag-transitive
generalized polygons is NC-hard, we have to break the problem down to a point where we must start a
case-by-case study involving the different classes of finite simple groups. One celebrated method is the use of
the famous O'Nan-Scott Theorem. This theorem distinguishes several 
classes of primitive permutation groups, one
being the class HA above. Another class is the class AS, the Almost Simple case, and this class contains all
known examples of finite flag-transitive generalized hexagons and octagons. Ideally, one would like to get rid
of all O'Nan-Scott classes except for the class AS. The rest of the proof would then consist of going through
the list of finite simple groups and try to prove that the existing examples are the only possibilities. In the
present paper, we achieve this goal. We even do a little better and prove that we can restrict to Chevalley
groups, that is, we rule out the almost simple groups with alternating socle, the sporadic groups being eliminated
already in \cite{Bue-Mal}. The treatment of the different classes of Chevalley groups is a nontrivial but | so
it appears | a feasible job, and shall be pursued elsewhere. Note that the classical hexagons and octagons have a
flag-transitive automorphism group of almost simple type with socle the simple Chevalley groups of type
$\mathsf{G}_2$, $^3\mathsf{D}_4$ and $^2\mathsf{F}_4$. Their construction is
with the natural BN-pair. 
The automorphism group of these polygons is primitive on both 
the point-set and the line-set, and it is also flag-transitive.

We note, however, that our assumptions include primitive actions on both the 
point and the line set of the
generalized hexagon or octagon. 
In some case, this can be weakened, and we have stated our intermediate and
partial results each time under the weakest hypotheses. This could be important for future use when trying to
reduce the general case to the primitive one handled in large in this paper.

A similar treatment for the finite generalized quadrangles seem out of reach for the moment. Therefore, we
restrict ourselves to the cases of hexagons and octagons for the rest of the paper.

\section{Setting}

Let $\cS=(\cP,\cL,\inc)$ be a finite point-line geometry, where
$\cP$ is a point set, $\cL$ is a line set, and $\inc$ is a
binary symmetric incidence relation. The \emph{incidence graph} of
$\cS$ is the graph with vertex set $\cP\cup\cL$, where the
adjacency relation is given by the incidence relation $\inc$. The
\emph{diameter} of $\cS$ is by definition the diameter of the
incidence graph of $\cS$, and the \emph{gonality} of $\cS$ is by
definition half of the girth of the incidence graph of $\cS$
(which is a bipartite graph and therefore has even girth). For $n\geq 2$,
the geometry $\cS$ is a \emph{weak generalized $n$-gon},
if
both the diameter and the gonality of $\cS$ are equal to $n$. If
every point is incident with at least three lines, and every line
carries at least three points, then we say that $\cS$ is
\emph{thick}, and we call it a \emph{generalized $n$-gon}, or
\emph{generalized polygon}. In this case, there are positive
integers $s,\ t\geq 2$ such that every line is incident with $s+1$
points, and every point is incident with $t+1$ lines. We call
$(s,t)$ the \emph{order} of $\cS$. If $n=2$, then $\cS$ is a
trivial geometry where every point is incident with every line. If
$n=3$, then $\cS$ is a projective plane.

A generalized $6$-gon (or \emph{hexagon}) $\cS$ with order $(s,t)$ has $(1+s)(1+st+s^2t^2)$ points and
$(1+t)(1+st+s^2t^2)$ lines. The number of flags, that is the number of incident point-line pairs,  of $\cS$ is
equal to $(1+s)(1+t)(1+st+s^2t^2)$. Also, it is well known that $st$ is a
perfect square (see \cite{Fei-Hig:64,vm}).
A generalized $8$-gon (or \emph{octagon}) $\cS$ with order $(s,t)$ has $(1+s)(1+st)(1+s^2t^2)$ points and
$(1+t)(1+st)(1+s^2t^2)$ lines. The number of flags of $\cS$ is equal to $(1+s)(1+t)(1+st)(1+s^2t^2)$. Also, it
is well known that $2st$ is a perfect square (see \cite{Fei-Hig:64,vm}). 
Hence one of $s,\ t$ is even and
consequently, either the number of points or the number of lines of $\cS$ is odd.

Let $\cS=(\cP,\cL,\inc)$ be a generalized hexagon or octagon. A {\em collineation} or {\em automorphism} of $G$
is a permutation of the point set $\cP$, together with a permutation of the line set $\cL$, preserving
incidence. The group of automorphisms is denoted by $\aut\cS$ and is referred to as the {\em automorphism
group} of $\cS$.  If $G$ is a group of automorphisms of $\cS$, then $G$ can be viewed as a permutation group on
$\cP$ and also as a permutation group on $\cL$. The main theorem of this paper is the following.

\begin{theo}\label{simple}
Suppose that $G$ is a group of automorphisms of a generalized hexagon or
octagon $\cS=(\cP,\cL,\inc)$. If $G$ is primitive on  both $\cP$ and $\cL$ and
$G$ is flag-transitive then $G$ must be an almost simple group of Lie type.
\end{theo}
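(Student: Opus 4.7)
The natural approach is to apply the O'Nan--Scott theorem to the primitive action of $G$ on $\cP$ (equivalently, on $\cL$) and rule out every O'Nan--Scott type except the almost simple one, and then within the almost simple case rule out the alternating and sporadic socles. The sporadic case is already handled in \cite{Bue-Mal}, and the HA case was disposed of in the introductory discussion. So the plan is to address the remaining ``non-simple-socle'' types HS, HC, SD, CD, TW, PA and, separately, the almost simple type with alternating socle.

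The main leverage comes from the explicit formulas recalled in Section~2: in the hexagon case $|\cP|=(1+s)(1+st+s^2t^2)$ and $|\cL|=(1+t)(1+st+s^2t^2)$ with $st$ a square, while in the octagon case $|\cP|=(1+s)(1+st)(1+s^2t^2)$ and $|\cL|=(1+t)(1+st)(1+s^2t^2)$ with $2st$ a square (so exactly one of $|\cP|,|\cL|$ is odd). For the product-like types the socle is $N=T^k$ with $T$ a non-abelian simple group, and the degree $|\cP|$ is very constrained: for SD it equals $|T|^{k-1}$ with $k\geq 2$; for CD and PA it is a proper power $m^\ell$ with $\ell\geq 2$; and for HS/HC/TW it equals $|T|^k$. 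The strategy is to show that $(1+s)(1+st+s^2t^2)$ or $(1+s)(1+st)(1+s^2t^2)$ cannot take such a shape while simultaneously being compatible with the parallel constraints coming from the primitive action on $\cL$ and from flag-transitivity, which forces $|G|$ to be divisible by the flag count and the point stabiliser to act transitively on the $t+1$ lines through a point (and analogously for lines).

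A concrete tool is that the factors $1+s$ and $1+st+s^2t^2$ (respectively $1+s$, $1+st$, $1+s^2t^2$ in the octagon case) have very restricted common divisors, e.g.\ $1+st+s^2t^2\equiv t^2-t+1\pmod{s+1}$, so the prime factorisation of $|\cP|$ splits into well-separated blocks, which is incompatible with $|\cP|$ being a large proper power. Combined with the parity constraint from octagons and the known elementary bounds $t\leq s^3,\ s\leq t^3$ for hexagons and $t\leq s^2,\ s\leq t^2$ for octagons, this rules out all of HS, HC, SD, CD and TW fairly directly. I expect the main obstacle in this block to be the PA case: there $|\cP|$ is only required to be some $k$-th power $m^k$ with $k\geq 2$, leaving genuine freedom; the argument will have to use the product-action structure on \emph{both} $\cP$ and $\cL$ simultaneously and invoke flag-transitivity to pin down the relationship between the two Cartesian decompositions, ultimately contradicting the arithmetic above.

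For the almost simple case with alternating socle $\alt{n}$, I would run through the possible primitive actions of $\alt n$ (action on $k$-subsets, on uniform partitions, and the almost simple/product/diagonal actions on cosets of a maximal subgroup as given by the O'Nan--Scott theorem applied inside $\sym n$) and ask whether the degree can factorise in the hexagon or octagon shape with a matching $|\cL|$ primitive of the same group and a common flag-transitive subgroup. Each family is eliminated by the same type of arithmetic: the rapid growth and specific divisibility pattern of binomial coefficients and multinomials cannot match $(1+s)(1+st+s^2t^2)$ or $(1+s)(1+st)(1+s^2t^2)$ together with $st$ (or $2st$) being a square. The hardest subcase here will be the natural action on $k$-subsets for small $k$, where the combinatorics is the most flexible and a finer numerical argument will be required.
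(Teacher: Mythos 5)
Your high-level skeleton (O'Nan--Scott on $\cP$, eliminate every type except AS, then eliminate alternating socle, quoting \cite{Bue-Mal} for sporadics) is the same as the paper's, but the substance of your elimination -- arithmetic on the shape of the degree -- does not work for the cases that actually carry the weight, and it is not how the paper argues. For type HS the degree is just $|T|$, and for SD with $k=2$ it is again $|T|$: no proper-power condition arises, and no arithmetic obstruction to $(1+s)(1+st+s^2t^2)=|T|$ is available, so your ``well-separated blocks versus large proper power'' tool gives nothing there. The paper instead kills HA, HS, HC by a geometric fact absent from your proposal: the centralizer in $\aut\cS$ of a point-transitive group is intransitive on $\cP$ (Lemma~\ref{lemma1}(iv)), which is incompatible with the transitive centralizer these types possess. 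For PA and CD -- which you yourself identify as the obstacle and leave unresolved -- the paper's argument is not arithmetic at all: it identifies $\cP$ with $\Gamma^\ell$, shows via Lemma~\ref{lemma2} that distinct elements of $\Gamma$ have distinct stabilizers in the component socle, produces a point collinear with $x$ differing from it in a single coordinate, and then uses flag-transitivity together with Lemma~\ref{lemma1}(iii) (two commuting automorphisms sending $x$ to non-collinear points collinear with $x$ must map those points back to $x$) to reach a contradiction. For SD and TW the paper needs primitivity on \emph{both} $\cP$ and $\cL$: parity rules out octagons, the involution Lemma~\ref{lemma1}(ii) rules out $\gcd(s,t)\neq 1$, and in the coprime hexagon case the types must be SD on one side and TW on the other, forcing $|T|=(t+1)/(s+1)$ and $t=s|T|+|T|-1$; since $st$ is a square and $\gcd(s,t)=1$, $t$ is a square, while $4\mid |T|$ gives $t\equiv 3\pmod 4$, a contradiction. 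None of this delicate interplay is recoverable from the factorization heuristic you describe (note also that even the true fact that $1+st+s^2t^2$ lies strictly between consecutive squares does not by itself prevent $|\cP|$ from being a perfect power, since the factors need not be coprime or individually powers).

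The alternating-socle step has the same difficulty. Ruling out the action on $k$-subsets or on uniform partitions by ``divisibility patterns of binomial coefficients'' is not a feasible argument: nobody knows how to show directly that $\binom{n}{k}$ never equals $(1+s)(1+st+s^2t^2)$ with $st$ a square, and the paper does not attempt it; instead Theorem~\ref{mainas} treats intransitive and imprimitive point stabilizers by explicit geometric arguments, exhibiting elements of $\alt n$ that fix two non-collinear points $y,z$ while moving $x=y\JJoin z$. Arithmetic enters only for primitive stabilizers, and only after the decisive reduction you are missing: flag-transitivity plus the inequalities $s\leq t^3$ (hexagon) and $s\leq t^2$ (octagon) give $|G|\leq |G_x|^{12}$ (Lemma~\ref{lemma1}(v)), which combined with Mar\'oti's theorem and the Liebeck--Praeger--Saxl classification bounds the degree and yields the explicit finite list of Appendix~\ref{app1}; each entry is then excluded using the congruence facts from \cite{Bue-Mal} (every prime dividing $1+st+s^2t^2$ is $\equiv 1\pmod 3$ and $9\nmid 1+st+s^2t^2$; every prime dividing $1+s^2t^2$ is $\equiv 1\pmod 4$), packaged as $a(u)^3\leq u$, respectively $b(u)^2\leq u$, in Lemma~\ref{21}. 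Without the stabilizer-order bound and these congruence constraints your proposed arithmetic has no leverage, so the gap is not a matter of detail: the key lemmas (Lemma~\ref{lemma1}, Lemma~\ref{lemma2}, the SD/TW interplay, and the Mar\'oti-based reduction) are exactly what your outline omits.
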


\section{Some preliminary results}

The next result will be useful to rule out the existence of generalized polygons with a certain number of
points. Suppose that $n$ is a natural number and suppose that $n=3^{\alpha}p_1^{\alpha_1}\cdots p_k^{\alpha_k}$
where the $p_i$ are pairwise distinct primes all different from 3, $\alpha\geq 0$ and $\alpha_i\geq 1$ for all
$i$. Then we define the following quantities:
\begin{eqnarray*}
a(n)&=&3^{\max\{0,\alpha-1\}}\prod_{p_i\not\equiv 1\bmod 3} p_i^{\alpha_i};\\
b(n)&=&\prod_{p_i\not\equiv 1\bmod 4} p_i^{\alpha_i}.
\end{eqnarray*}
We obtain the following result about the number of
points of a generalized hexagon or octagon.

\begin{lemma}\label{21}
Suppose that $\cS=(\cP,\cL,\inc)$ is a generalized hexagon or octagon.
\begin{itemize}
\item[(i)] If $\cS$ is a generalized hexagon, then $a(|\cP|)^3\leq
  |\cP|$.
\item[(ii)] If $\cS$ is a generalized octagon, then $b(|\cP|)^2\leq
  |\cP|$.
\end{itemize}
\end{lemma}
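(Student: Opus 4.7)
The plan is to treat both parts in the same style: bound $a(|\cP|)$ (respectively $b(|\cP|)$) by a single factor of the standard expression for $|\cP|$, then close with a short inequality between the remaining factors. The key number-theoretic input is that $1+st+s^2t^2 = \Phi_3(st)$ and $1+s^2t^2 = \Phi_4(st)$, so their prime divisors are severely restricted.

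For part (i), the starting point is the factorisation $|\cP| = (1+s)(1+st+s^2t^2)$. Any prime $p$ dividing $\Phi_3(m)$ must be either $3$ or $\equiv 1 \pmod 3$; moreover an elementary calculation modulo $9$ yields $v_3(\Phi_3(m)) \leq 1$ for every integer $m$. Hence for a prime $p \neq 3$ with $p \not\equiv 1 \pmod 3$ one has $v_p(|\cP|) = v_p(1+s)$, while at the prime $3$ at least $3^{\alpha-1}$ of the full power $3^{\alpha}$ in $|\cP|$ must come from $1+s$. Matching this against the definition of $a$ gives $a(|\cP|) \mid 1+s$, so $a(|\cP|) \leq 1+s$. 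The proof then reduces to checking $(1+s)^2 \leq 1+st+s^2t^2$, which rearranges to $2+s \leq t(1+st)$ and is immediate from $s,t \geq 2$.

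For part (ii), the argument is parallel. Any prime dividing $\Phi_4(m)=1+m^2$ is either $2$ or $\equiv 1 \pmod 4$, and in fact $3$ never divides $1+m^2$. Since $2st$ is a perfect square the product $st$ is even, so $1+s^2t^2$ is actually odd. Therefore every prime counted with full multiplicity in $b(|\cP|)$, namely a prime $p \neq 3$ with $p \not\equiv 1 \pmod 4$, divides $|\cP|$ only through the factor $(1+s)(1+st)$. This gives $b(|\cP|) \mid (1+s)(1+st)$, hence $b(|\cP|) \leq (1+s)(1+st)$, and the proof is finished by the inequality $(1+s)(1+st) \leq 1+s^2t^2$, equivalent to $t(st-s-1) \geq 1$, which again holds because $s,t \geq 2$.

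I do not foresee a serious obstacle. The only subtlety is the prime $3$ in part (i): unlike the other excluded primes, a single power of $3$ may genuinely migrate from $1+s$ into $\Phi_3(st)$, and this is precisely what the exponent $\max\{0,\alpha-1\}$ in the definition of $a$ is designed to absorb.
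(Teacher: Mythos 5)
Your proof is correct and follows essentially the same route as the paper: show $a(|\cP|)$ divides $1+s$ (resp.\ $b(|\cP|)$ divides $(1+s)(1+st)$) using the congruence restrictions on the prime divisors of $1+st+s^2t^2$ (resp.\ $1+s^2t^2$), then finish with the elementary inequality between the two factors. The only difference is cosmetic: you derive the cyclotomic facts (primes dividing $\Phi_3(st)$, $\Phi_4(st)$, the bound $v_3\leq 1$, and the oddness of $1+s^2t^2$ from $2st$ being a square) directly, whereas the paper cites Buekenhout--Van Maldeghem for them.
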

\begin{proof}
(i) Suppose that $\cS$ is a generalized hexagon with order $(s,t)$.
Then $|\cP|=(1+s)(1+st+s^2t^2)$. As
  mentioned in the previous section, $st$ is a square, and it was proved in the last paragraph
  of~\cite[page~90]{Bue-Mal} that if $p$ is a prime such that
  $p|1+st+s^2t^2$, then $p\equiv 1\pmod 3$; in addition, $1+st+s^2t^2$ is not
  divisible by $9$. Thus $a(|\cP|)$ must divide $1+s$ and $|\cP|/a(|\cP|)$
  must be divisible by $1+st+s^2t^2$.
On the other hand, since $t\geq 2$, we obtain that
$(1+s)^2\leq (1+st+s^2t^2)$, which implies that $a(|\cP|)^2\leq
  |\cP|/a(|\cP|)$, and so part~(i) is valid.

(ii)  Suppose that $\cS$ is a generalized octagon with order $(s,t)$.
Then $|\cP|=(1+s)(1+st)(1+s^2t^2)$. As
  mentioned above, $2st$ is a square, and it was proved in~\cite[page~99]{Bue-Mal} that, if $p$ is a prime such that
  $p|1+s^2t^2$, then $p\equiv 1\pmod 4$.
Thus $b(|\cP|)$ must divide $(1+s)(1+st)$ and $|\cP|/b(|\cP|)$
  must be divisible by $(1+s^2t^2)$.
On the other hand, since $s,\ t\geq 2$, it follows that
$(1+s)(1+st)\leq (1+s^2t^2)$, and so $b(|\cP|)\leq
  |\cP|/b(|\cP|)$, and statement~(ii) holds.
\end{proof}
We will use the following notation: if $x$ is a point collinear
with the point $y$, that is, $x$ and $y$ are incident with a
common line, then we write $x\sim y$. Dually, the notation
$L\sim M$ for lines $L,\ M$ means that $L$ and $M$ are concurrent;
that is,
they share a common point. If $x$ and $z$ are non collinear points
collinear to a common point $y$, then, assuming that the gonality is at least
$5$,  the point $y$ is unique with this
property and we write
$y=x\JJoin z$.

If $G$ is a permutation group acting on a set $\Omega$ then the image of $\omega\in\Omega$ under $g\in G$ is
denoted by $\omega g$, while the stabilizer in $G$ of $\omega$ is denoted by $G_\omega$. The group $G$ is said
to be {\em semiregular} if $G_\omega=1$ for all $\omega\in \Omega$, and it is said to be {\em regular} if it is
transitive and semiregular.

\begin{lemma}\label{lemma1}
If $\cS=(\cP,\cL,\inc)$ is a generalized hexagon or octagon with order $(s,t)$, then the
following is true.
\begin{itemize}
\item[(i)] If $\gcd(s,t)\neq 1$ and $g$ is an automorphism of $\cS$, then
either $g$ has a fixed point or there is a point $x\in\mathcal P$
such that $x\sim xg$.
\item[(ii)] If $\gcd(s,t)\neq 1$ and $g$ is an automorphism of $\cS$ with order
  $2$, then $g$ has either a fixed point or a fixed line.
In particular, if $G$ is an automorphism group of $\cS$ with even order, then
$G$ cannot be semiregular on both $\cP$ and $\cL$.
\item[(iii)] Let $x$ be a point and let
$y_1$ and $y_2$ be two points collinear with $x$
such that $y_1$ is not collinear with $y_2$. Suppose there are
automorphisms $g_1,\ g_2$ mapping $x$ to $y_1,\ y_2$, respectively. If
$g_1$ and $g_2$ commute, then $y_1{g_2}=y_2{g_1}=x$.
\item[(iv)] If $G$ is an
automorphism group of $\cS$ which is transitive on $\cP$, then
$\cent{\aut\cS}G$ is intransitive on $\cP$.
\item[(v)] If $G$ is an automorphism group of $\cS$
acting faithfully and flag transitively, then
$|G|\leq|G_x|^{12}$ for all $x\in\cP$.
\end{itemize}
\end{lemma}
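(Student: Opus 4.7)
My plan is to prove (i)--(v) in order, with each part feeding the next; the three main ingredients are a Benson-type trace formula on the collinearity matrix of $\cS$ (for (i)), the uniqueness of the common neighbour $x\JJoin y$ of two non-collinear points (valid because the gonality is at least $5$, used in (iii)--(iv)), and standard facts about centralizers of transitive permutation groups (used in (iv)--(v)).

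For (i) I would apply the Benson-type eigenvalue identity for a collineation $g$ acting on a finite generalized polygon. If $A$ denotes the collinearity matrix of $\cS$, the quantity $f(g):=|\{x\in\cP\mid xg=x\text{ or }xg\sim x\}|$ equals $\tr((I+A)g)$, and spectral decomposition writes $f(g)$ as an integer combination of traces of $g$ on the eigenspaces of $A$. The non-principal eigenvalues of $A$ are $s-1$, $-t-1$ and $-1\pm\sqrt{st}$ in the hexagon case and involve $\sqrt{2st}$ in the octagon case; the hypothesis $\gcd(s,t)\ne 1$ controls the denominators coming from the irrational contributions and forces $f(g)\ne 0$ via a divisibility argument, so $g$ has a fixed point or a point mapped to a collinear one. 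Part (ii) then drops out: if an involution $g$ is fixed-point free, take $x$ with $x\sim xg$ from (i) and observe that the unique line $L$ through $x$ and $xg$ satisfies $Lg=L$ (since $Lg$ passes through $xg$ and $xg^2=x$), so $g$ fixes $L$. The ``in particular'' clause follows by applying this to an involution of $G$, which exists by Cauchy's theorem whenever $|G|$ is even.

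Part (iii) is a short calculation: setting $z:=xg_1g_2=xg_2g_1$ (equal because $g_1,g_2$ commute), we have $z=y_1g_2$ and $z=y_2g_1$; since $g_2$ is a collineation sending the collinear pair $(x,y_1)$ to $(y_2,z)$, we get $z\sim y_2$, and symmetrically $z\sim y_1$. Because $y_1\not\sim y_2$ and both admit $x$ as a common neighbour, uniqueness of $y_1\JJoin y_2$ forces $z=x$, hence $y_1g_2=y_2g_1=x$. Part (v) is elementary: flag-transitivity gives $|G_x|\ge s(t+1)$, because $G_x$ is transitive on the $t+1$ lines through $x$ and each $G_{x,L}$ is transitive on the $s$ other points of $L$; crude estimates yield $|\cP|\le 6s^3t^2$ for a hexagon and $|\cP|\le 8s^4t^3$ for an octagon. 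Combined with $|G|=|G_x|\cdot|\cP|$, the bound $|G|\le|G_x|^{12}$ reduces to $6s^3t^2\le(st)^{11}$ (respectively $8s^4t^3\le(st)^{11}$), both trivial for $s,t\ge 2$.

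The main obstacle is (iv). Assume for contradiction that $C:=\cent{\aut\cS}{G}$ is transitive on $\cP$. The standard centralizer theorem says $C$ is always semiregular (as the centralizer of a transitive group), so transitivity forces regularity and $|C|=|\cP|$; using $\cent{\sym{\cP}}{G}\cong N_G(G_x)/G_x$ this then forces $G_x\trianglelefteq G$, and since $\aut\cS$ acts faithfully on $\cP$ (because $n\ge 3$) we obtain $G_x=1$, so $G$ itself is regular on $\cP$. Now pick two neighbours $y_1,y_2$ of $x$ lying on distinct lines through $x$; the girth of the incidence graph ($12$ for a hexagon, $16$ for an octagon) prevents a third line connecting $y_1$ and $y_2$, so $y_1\not\sim y_2$. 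Taking the unique $g\in G$ and $c\in C$ with $xg=y_1$ and $xc=y_2$, they commute by definition of $C$, and (iii) gives $y_1c=y_2g=x$. The plan from here is to iterate this relation across several independent choices of $(y_1,y_2)$ and exploit that $C$ must coincide with the opposite regular representation of $G$ (so that the collinearity graph realises as a Cayley graph of $G$ with a conjugation-invariant connection set of size $s(t+1)$), in order to contradict the combinatorial structure of a hexagon or octagon. Extracting this contradiction cleanly is the delicate step I expect to absorb most of the effort.
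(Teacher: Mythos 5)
Your (ii) and (iii) coincide with the paper's arguments, but two of the remaining parts have genuine gaps. The most serious is (iv), which you yourself leave open: after an unnecessary reduction to the case where $G$ and $C=\cent{\aut\cS}G$ are both regular, you stop at the single relation $y_1c=y_2g=x$ and defer the contradiction to an unspecified ``iteration'' through the Cayley-graph structure. No such analysis (and no regularity reduction, hence no appeal to semiregularity of centralizers or to $\cent{\sym\cP}G\cong\Norm_G(G_x)/G_x$) is needed: assuming only that $G$ and $C$ are transitive on $\cP$, fix $x\sim y$ and $g\in G$ with $xg=y$; since $\cS$ is thick and has gonality at least $6$, choose two \emph{distinct} points $y_1,y_2$ collinear with $x$ and not collinear with $y$, and $c_1,c_2\in C$ with $xc_1=y_1$, $xc_2=y_2$. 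Applying (iii) to the commuting pairs $(g,c_1)$ and $(g,c_2)$ gives $y_1g=x$ and $y_2g=x$, contradicting the injectivity of $g$. This is exactly the paper's proof; the key point you missed is to use one element of $G$ against \emph{two} elements of $C$, rather than one of each.

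Part (v) contains an unjustified step. Flag-transitivity gives that $G_x$ is transitive on the $t+1$ lines through $x$ and that $G_L$ is transitive on the $s+1$ points of $L$, hence $|G_x|\geq t+1$; it does \emph{not} give that $G_{x,L}$ is transitive on the remaining $s$ points of $L$ (that would be a $2$-transitivity condition on the point row of a line, which is not part of the hypothesis). So your bound $|G_x|\geq s(t+1)$, on which the reduction to $6s^3t^2\leq (st)^{11}$ rests, is unproved, and without it your crude estimate of $|\cP|$ in terms of both $s$ and $t$ cannot be beaten by powers of $t+1$ alone. The paper closes this gap differently: it uses only $|G_x|\geq t+1$ together with the inequalities $s\leq t^3$ (Haemers--Roos) for hexagons and $s\leq t^2$ (Higman) for octagons, which give $|\cP|\leq (1+t)^{11}\leq |G_x|^{11}$. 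Finally, for (i) the paper does not give a proof but cites the Benson-type fixed-point results of Temmermans--Thas--Van Maldeghem; your eigenvalue/trace sketch is indeed the method used there, but the decisive divisibility step (why $\gcd(s,t)\neq 1$ forces the count of points $x$ with $x=xg$ or $x\sim xg$ to be nonzero) is asserted rather than carried out, and your eigenvalue list for the hexagon is slightly off (the collinearity matrix has eigenvalues $s(t+1)$, $s-1\pm\sqrt{st}$ and $-t-1$), so as written (i) is a plan rather than a proof.
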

\begin{proof}
Claim~(i) is shown in~\cite{bt}.
To show~(ii), let $g$ be an automorphism with order $2$ and assume that $g$ has
no fixed point. Then, by~(i), there is a point $x\in\cP$, such that $x\sim
xg$. Suppose that $L$ is the line that is incident with $x$ and $xg$. Then
the image $Lg$ 
of $L$ is incident with $xg$ and $xg^2=x$, and so $Lg=L$. Thus $L$
must be a fixed line of $g$. If $G$ is an automorphism group with even order 
then $G$ contains an automorphism with order $2$. 
If $G$ is semiregular on $\cP$ then $g$ has no fixed point in $\cP$. Thus, 
by the argument above, $g$ must have a fixed line, and so $G$ cannot be
semiregular on $\cL$. Thus (ii) is proved. In claim~(iii),
as $x\sim y_1$, the point $y_2=x{g_2}$ is collinear with
$y_1g_2=x{g_1g_2}$. Similarly,
$y_1=x{g_1}$  is collinear with $y_2g_1=x{g_2g_1}=x{g_1g_2}$.
Hence if $x\neq x{g_1g_2}$, then the gonality of $\cS$ would be
at most $4$, which is a contradiction.
Let us now show (iv).
Set $C=\cent{\aut\cS}G$ and assume that $C$ is transitive on $\cP$.
Let $x$ and $y$ be vertices of $\cS$ such that $x\sim y$. Then there is some
$g\in G$ such that $xg=y$. On the other hand, as $\cS$ is thick and its
gonality is at least $6$,
we can choose distinct
vertices $y_1$ and $y_2$ such that $x\sim y_1$, $x\sim y_2$, $y\not\sim y_1$,
and $y\not\sim y_2$. By assumption,
$C$ is transitive, and so there are
$c_1,\ c_2\in C$ such that $x{c_1}=y_1$ and $x{c_2}=y_2$. Then we obtain
that $y_1g=y_2g=x$, which is a contradiction, and so (iv) is valid.

Finally, we verify~(v). Suppose first
that $\cS$ is a generalized hexagon with order $(s,t)$, let
 $x\in\cP$ and let $G_x$ denote the
stabilizer in $G$ of $x$. Since $G$ is flag-transitive, $G_x$ must be
transitive on the $t+1$ lines that are incident with $x$ and,
in particular,
$|G_x|\geq t+1$.
Therefore, using the Orbit-Stabilizer Theorem and
the inequality $s\leq t^3$ (see~\cite{hr}
and \cite[Theorem~1.7.2(ii)]{vm}),
$$
\frac{|G|}{|G_x|}=|\cP|=(1+s)(1+st+s^2t^2)\leq (1+t^3)(1+t^4+t^8)\leq
(1+t)^{11}\leq |G_x|^{11},
$$
and the statement for hexagons follows.
If $\cS$ is a generalized octagon with order $(s,t)$, then,
using the inequality $s\leq t^2$
(see~\cite{hig} and~\cite[1.7.2(iii)]{vm}), we obtain similarly that
$$\frac{|G|}{|G_x|}=|\cP|=(1+s)(1+st)(1+s^2t^2)\leq (1+t^2)(1+t^3)(1+t^6)\leq
(1+t)^{11}\leq |G_x|^{11},$$
and the statement for octagons also follows.
 \end{proof}

We note that a generalized hexagon or octagon is a self-dual structure, and so
the dual of a true statement is also true. For instance,  taking
the dual of
statement~(iv), we obtain the following fact:
{\em if $G$ is a line-transitive automorphism group of
  $\cS$, then $\cent{\aut S}G$ is intransitive on the lines}. In this paper we
do not state the dual of a each of the results, but we often use the dual
statements in our
arguments.

We will also need the following group theoretic lemma.
Recall that a group $G$ is said to be {\em almost simple} if it has a unique minimal
normal subgroup $T$ which is non-abelian and simple. In this case, $T$ is the
{\em socle} of $G$ and the group $G$ can be
considered as a subgroup of the automorphism group of $T$ containing all inner
automorphisms.

\begin{lemma}\label{lemma2}
(a)
Let $S$ be an almost simple group with socle $T$
and let $H$ be a maximal subgroup of $S$ such that $T\not\leq H$.
Then $\norm{T}{H\cap T}=H\cap T$.

(b) Suppose that $T_1,\ldots,T_k$ are pairwise isomorphic finite simple groups and, for
$i=2,\ldots,k$, the map $\alpha_i:T_1\rightarrow T_i$ is an isomorphism. Then
the subgroup
$$
D=\{(t,\alpha_2(t),\ldots,\alpha_k(t))\ |\ t\in T_1\}
$$ is self-normalizing in $T_1\times\cdots\times T_k$.
\end{lemma}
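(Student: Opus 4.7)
Set $N=\norm{T}{H\cap T}$; the inclusion $H\cap T\leq N$ is automatic, and the task reduces to proving the reverse inclusion $N\leq H$. The key observation is that $H$ normalizes $T$ (because $T$ is normal in $S$) and trivially normalizes $H\cap T$, hence normalizes its normalizer $N$ in $T$. Consequently $HN$ is a subgroup of $S$, and since both $H$ and $N$ normalize $H\cap T$ we also have $HN\leq\norm{S}{H\cap T}$. Because $H$ is maximal in $S$ and $H\leq HN$, either $HN=H$ or $HN=S$. In the former case $N\leq H\cap T$, which together with the automatic inclusion yields the desired equality. The latter case forces $H\cap T$ to be normal in $S$; since $T$ is the unique minimal normal subgroup of $S$, every nontrivial normal subgroup of $S$ contains $T$, and since $T\not\leq H$ the intersection $H\cap T$ is a proper subgroup of $T$. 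So in this case $H\cap T=1$. Disposing of this degenerate branch is the main technical obstacle: if it occurred then $N=T$ while the claim would require $N=1$, so I must rule it out. The plan is to exploit maximality of $H$ as a complement to $T$ in $S$: any proper nontrivial $H$-invariant subgroup $K$ of $T$ would produce a subgroup $HK$ strictly between $H$ and $S$, contradicting maximality. I expect to produce such a $K$ (for instance as the centralizer in $T$ of an element of prime order in a suitable $H$-fixed class, or by orbit-counting on prime-order elements), exploiting that $T$ is a non-abelian finite simple group and therefore sufficiently rich to possess $H$-invariant proper subgroups.

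\textbf{Plan for (b).} Let $G=T_1\times\cdots\times T_k$ and suppose $(g_1,\ldots,g_k)\in G$ normalizes $D$. For each $t\in T_1$ the conjugate
\[
(g_1,\ldots,g_k)(t,\alpha_2(t),\ldots,\alpha_k(t))(g_1,\ldots,g_k)^{-1}=(g_1tg_1^{-1},g_2\alpha_2(t)g_2^{-1},\ldots,g_k\alpha_k(t)g_k^{-1})
\]
must again lie in $D$. Setting $s=g_1tg_1^{-1}$, the $i$-th coordinate is therefore forced to equal $\alpha_i(s)=\alpha_i(g_1)\alpha_i(t)\alpha_i(g_1)^{-1}$, so $\alpha_i(g_1)^{-1}g_i$ commutes with $\alpha_i(t)$ for every $t\in T_1$. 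Hence $\alpha_i(g_1)^{-1}g_i$ centralizes $\alpha_i(T_1)=T_i$, and since $T_i$ is non-abelian simple with trivial center, $g_i=\alpha_i(g_1)$ for each $i$. Thus $(g_1,\ldots,g_k)\in D$, proving $\norm{G}{D}=D$. This half is essentially mechanical and presents no obstacle; the only content used is the triviality of the center of a non-abelian simple group.
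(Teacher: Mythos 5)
Your part (b) is correct and is essentially the paper's own computation: membership of the conjugate in $D$ forces $\alpha_i(g_1)^{-1}g_i$ to centralize $T_i$, and triviality of the centre finishes it. The issue is part (a). Your reduction via $HN$ and maximality of $H$ is sound and correctly isolates the only problematic branch, namely $H\cap T=1$; but that branch is exactly where the content of the lemma sits (if $H\cap T=1$ the asserted equality would read $T=1$), and you do not close it — you only announce a plan (``I expect to produce such a $K$\dots''). The paper disposes of it by regarding $S$ as a primitive permutation group on the coset space $[S:H]$ and quoting the standard fact that the socle of a primitive group of almost simple type is non-regular, which gives $H\cap T\neq 1$ immediately. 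That fact is genuinely nontrivial: the usual proof observes that $H\cap T=1$ would force $H\cong S/T\leq\out T$, solvable by Schreier's conjecture, and then one must still rule out a solvable maximal complement to $T$; it is not something that falls out of ``richness'' of $T$ by soft arguments. Your proposed substitutes do not obviously work: the centralizer in $T$ of a single prime-order element of an $H$-invariant class is not $H$-invariant (only the family of such centralizers is permuted), while the subgroup generated by an $H$-invariant class is normal in $T$, hence all of $T$ by simplicity; and ``orbit-counting on prime-order elements'' is not yet an argument. So as written, (a) has a genuine gap precisely at the step $H\cap T\neq 1$.

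Once that step is granted, your argument and the paper's coincide in substance: $H\cap T$ is then a nontrivial proper subgroup of $T$, it cannot be normal in $S$ (any nontrivial normal subgroup of $S$ contains the unique minimal normal subgroup $T$), maximality gives $\norm{S}{H\cap T}=H$, and intersecting with $T$ yields $\norm{T}{H\cap T}=H\cap T$. To repair your write-up, either cite the non-regularity of the socle in a primitive almost simple group (as the paper does), or supply the Schreier-based argument sketched above in place of your speculative construction of an $H$-invariant subgroup.
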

\begin{proof}
(a)
If $S$ and $T$ are as in the lemma, then
$H\cap T\unlhd H$. Hence
$H\leq \norm S{H\cap T}$. Note that $S$ can be considered as a primitive group
acting on the right coset space $[S:H]$ with point-stabilizer $H$. Since the 
socle of such a primitive group is non-regular, we obtain that $H\cap T\neq
1$. Hence  
$H\cap T$ is a proper, non-trivial subgroup of $T$, which cannot be normal in $S$. Thus, since $H$ is a maximal
subgroup of $S$, we obtain that
$\norm S{H\cap T}=H$.
Hence $\norm T{H\cap T}=\norm S{H\cap T}\cap T=H\cap T$.

(b) Let $G=T_1\times\cdots\times T_k$ and let $(t_1,\ldots,t_k)\in\norm{G}D$. Then,
for all $t\in T_1$,
$$
(t,\alpha_2(t),\ldots,\alpha_k(t))^{(t_1,\ldots,t_k)}=\left(t^{t_1},\alpha_2(t)^{t_2}\ldots,
\alpha_k(t)^{t_k}\right)\in D.
$$
Thus, for all $i\in\{2,\ldots,k\}$, we obtain that $\alpha_i(t^{t_1})=\alpha_i(t)^{t_i}$. Therefore
$t_1\alpha_i^{-1}(t_i)^{-1}\in\cent{T_i}t$. As this is true for all $t\in T_1$, we obtain that
$t_1\alpha_i^{-1}(t_i)^{-1}\in Z(T_1)$. As $T_1$ is a non-abelian, finite, simple group, this yields that
$\alpha_i(t_1)=t_i$. Hence $(t_1,\ldots,t_k)\in D$, and so $\norm GD=D$.
\end{proof}

\section{Hexagons and Octagons with primitive automorphism group}

The structure of a finite primitive permutation group is described by the O'Nan-Scott Theorem
(see~\cite[Sections~4.4--4.5]{cam} or~\cite[Section~4.8]{dm}). In the mathematics literature, one can find
several versions of this theorem, and in this paper we use the version that can, for instance, be found
in~\cite[Section~3]{bad:quasi}. Thus we distinguish between 8 classes of finite primitive groups, namely {\sc
HA}, {\sc HS}, {\sc HC}, {\sc SD}, {\sc CD}, {\sc PA}, {\sc AS}, {\sc TW}. A
description of these classes can be found below.

Recall that in a finite group $G$, the {\em socle} of $G$ is the product of the
minimal normal subgroups in $G$ and it is denoted by $\soc G$. In fact, $\soc
G$ is the direct product of the minimal normal subgroups of $G$. As a minimal
normal subgroup of $G$ is a direct product of pairwise isomorphic finite
simple groups, the socle of $G$ is also the direct product of
finite simple groups.

Suppose that $G_1,\ldots,G_k$ are groups, set $G=G_1\times\cdots\times G_k$,
and, for
$i\in\{1,\ldots,k\}$, let $\varphi_i$ denote the natural projection map
$\varphi_i:G\rightarrow G_i$. A subgroup $H$ of $G$ is said to be {\em subdirect
with respect to the given direct decomposition of $G$} if
$\varphi_i(H)=G_i$ for $i=1,\ldots,k$. If the $G_i$ are non-abelian
finite simple groups then the $G_i$ are precisely the minimal normal
subgroups of $G$. In this case,
a subgroup $H$ is said to be {\em
  subdirect} if it is  subdirect with respect to the decomposition
of $G$ into the direct product of its minimal normal subgroups.
If $G$ is a finite group then the holomorph $\hol G$ is defined as the
semidirect product $G\rtimes \aut G$.

The O'Nan-Scott type of a finite primitive permutation group $G$ can be
recognized from the structure and the
permutation action of $\soc G$. Let $G\leq\sym\Omega$
be a finite primitive
permutation group, let $M$ be a minimal normal subgroup of $G$, and
let $\omega\in\Omega$. Note that $M$ must be transitive on $\Omega$.
Further, $M$ is a characteristically simple group, and so it is isomorphic to
the direct product of pairwise non-isomorphic finite simple groups.
The
main characteristics of $G$ and $M$ in each primitive type
are as follows.

\begin{enumerate}
\item[HA] $M$ is abelian and regular, $\cent GM=M$ and $G\leq \hol M$.
\item[HS] $M$ is non-abelian, simple, and regular;
$\soc G=M\times\cent GM\cong M\times M$ and
$G\leq\hol M$.
\item[HC] $M$ is non-abelian, non-simple, and regular; $\soc
G=M\times\cent GM\cong M\times M$ and $G\leq\hol M$.
\item[SD] $M$ is non-abelian and non-simple; $M_\omega$ is a
simple subdirect subgroup of $M$ and $\cent GM=1$.
\item[CD] $M$ is non-abelian and non-simple; $M_\omega$ is a
non-simple subdirect subgroup of $M$ and  $\cent GM=1$.
\item[PA] $M$ is non-abelian and non-simple; $M_\omega$ is a
not a subdirect subgroup of $M$ and $M_\omega\neq 1$; $\cent GM=1$.
\item[AS] $M$ is non-abelian and simple; $\cent GM=1$, and so $G$ is
an almost simple group.
\item[TW] $M$ is non-abelian and non-simple; $M_\omega=1$; $\cent GM=1$.
\end{enumerate}

We pay special attention to the groups of type AS. In this class, the group
$G$ has a unique minimal normal subgroup which is non-abelian and
simple. Therefore $G$ is isomorphic to a subgroup of $\aut T$ which contains
all inner automorphisms. Such an abstract group is referred to as {\em almost
  simple}.
The next result shows that under certain conditions a primitive automorphism
group of a generalized hexagon or octagon must be an almost simple group.

\begin{theo}\label{th2}
If $G$ is a point-primitive, line-primitive and flag-transitive
group of automorphisms of a generalized hexagon or octagon,
then the type of $G$ must be {\rm AS} on both the points and the lines.
In particular, $G$, as an abstract group, must be
almost simple.
\end{theo}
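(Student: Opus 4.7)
The plan is to go through each of the eight O'Nan-Scott classes and rule out every one except AS. Since a generalized hexagon or octagon is self-dual, if the type of $G$ on $\cP$ must be AS, the same argument applied to the dual geometry gives AS on $\cL$; hence $G$ is almost simple. Throughout, the main general fact we use is that any nontrivial normal subgroup of $G$---in particular the socle and each of its minimal normal factors---is transitive on both $\cP$ and $\cL$, by primitivity of $G$ on each.

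In types HA, HS, HC, TW the group $G$ admits a minimal normal subgroup $N$ that is regular on $\cP$: in HA and TW, $N=\soc G$, while in HS and HC, $N$ is one of the two regular factors of $\soc G=M_1\times M_2$. Hence $|N|=|\cP|$, and since $N\trianglelefteq G$ is also transitive on $\cL$, the index $|\cL|$ divides $|\cP|$. For octagons this forces $|\cP|=|\cL|$, so $s=t$, contradicting the parity condition that $s$ and $t$ have opposite parities. For hexagons we get $(1+t)\mid(1+s)$, so $s\geq t$; a dual argument yields $t\geq s$, so $s=t$, whence $\gcd(s,t)=s\geq 2$ and $N$ is in fact regular on both $\cP$ and $\cL$. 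In types HS, HC, TW the group $N$ is non-abelian with even order, so Lemma~\ref{lemma1}(ii) forbids $N$ being semiregular on both $\cP$ and $\cL$, a contradiction. For HA, $N$ is elementary abelian of order $p^d$, so $|\cP|=(1+s)(1+s^2+s^4)=p^d$; factoring $1+s^2+s^4=(1+s+s^2)(1-s+s^2)$ and analysing the (pairwise coprime, except for a possible common factor of $3$ between $1+s$ and $1-s+s^2$) factorization into three factors each $\geq 3$ forces a contradiction for $s\geq 2$. Alternatively, for HS and HC, Lemma~\ref{lemma1}(iii) gives a cleaner route: the stabilizer $(\soc G)_x$ is forced to be a diagonal subgroup $\{(m,\phi(m)):m\in M_1\}$, and for any pair $y_1\sim x\sim y_2$ with $y_1\not\sim y_2$, writing $y_i=xg_i$ with $g_i\in M_i$ produces commuting $g_1,g_2$; Lemma~\ref{lemma1}(iii) then yields $g_1g_2\in(\soc G)_x$, so $g_2=\phi(g_1)$, which uniquely determines $y_2$ from $y_1$ and contradicts the $st\geq 4$ free choices.

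The remaining types SD, CD, PA share the structure $\soc G=T^k$ with $k\geq 2$, $T$ non-abelian simple, and $\cent GM=1$. In SD and CD, Lemma~\ref{lemma2}(b) identifies $M_p$ with a product of $l$ diagonal subgroups ($l=1$ for SD, $1<l<k$ for CD), giving $|\cP|=|T|^{k-l}$; the order inequality $|G|\leq|G_p|^{12}$ from Lemma~\ref{lemma1}(v), combined with the containment $G_p/M_p\hookrightarrow\out T\wr\sym k$ and with the arithmetic constraints from Lemma~\ref{21} on $(1+s)(1+st+s^2t^2)$ or $(1+s)(1+st)(1+s^2t^2)$, then eliminates these types. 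The main obstacle will be PA: here $M_p$ is, up to coordinate permutation, of the form $R^k$ for some proper subgroup $R<T$, so $|\cP|=(|T|/|R|)^k$ is a perfect $k$th power. This extra multiplicative freedom means one must carefully combine the $k$th-power structure with Lemma~\ref{lemma1}(v) and the prime-divisor constraints of Lemma~\ref{21} to force $k=1$ (i.e., AS). Once the action on $\cP$ is shown to be of type AS, duality of the geometry gives AS on $\cL$ as well, and $G$ is almost simple.
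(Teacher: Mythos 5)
Your overall plan (eliminate every O'Nan--Scott class except AS, then dualize) is the paper's plan, but two steps that carry the real weight are not actually proved. In your HA/HS/HC/TW batch the pivotal move is ``a dual argument yields $t\geq s$'' (and, for octagons, ``this forces $|\cP|=|\cL|$''). That dual argument is only available if the action of $G$ on $\cL$ also has a regular minimal normal subgroup, which is not known at this stage: if the type on $\cP$ is TW, the type on $\cL$ could a priori be SD, CD, PA or TW, and mixed situations really occur as abstract possibilities (see the Gross--Kov\'acs example quoted after Lemma~\ref{prlemma}, of one group with faithful primitive actions of type TW and of type SD). So $s=t$ is not established; moreover your parity claim is wrong as stated (in the Ree--Tits octagons both parameters are even --- what is true is that $s\neq t$, since $2s^2$ is never a square). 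The HA case is repairable, because a transitive abelian normal subgroup is regular on $\cL$, and your alternative for HS/HC via Lemma~\ref{lemma1}(iii) is sound (the paper gets these more quickly from Lemma~\ref{lemma1}(iv)); but TW is left genuinely open. In the paper it is exactly this residual SD/TW configuration, with possibly different types on $\cP$ and $\cL$, that needs the delicate argument of Lemma~\ref{prlemma}(iii): oddness considerations to force a hexagon, Lemma~\ref{lemma1}(ii) to force $\gcd(s,t)=1$, then $|\cP|=|T|^{k-1}$, $|\cL|=|T|^k$, $4\mid |T|$, and ``$t$ is a square yet $t\equiv 3\pmod 4$''.

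For SD, CD and PA you only assert that Lemma~\ref{lemma1}(v), Lemma~\ref{lemma2} and the arithmetic of Lemma~\ref{21} ``then eliminate these types'', and for PA you explicitly defer the argument (``one must carefully combine \dots to force $k=1$''). These counting constraints do not suffice: $|G|\leq|G_x|^{12}$ merely bounds the number $k$ of simple factors and is vacuous for small $k$, while the conditions $a(u)^3\leq u$ and $b(u)^2\leq u$ can easily hold for $u=|T|^{k-1}$ or $u=|\Gamma|^k$ once $|T|$ has enough prime divisors congruent to $1$ modulo $3$ (for instance $T=\psl_2(13)$ with small $k$ yields no contradiction), so no elimination follows. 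The removal of PA and CD is the technical core of the paper's proof of Lemma~\ref{prlemma}(ii), and it is geometric, not numerical: identify $\cP$ with $\Gamma^\ell$, use Lemma~\ref{lemma2} (maximality and self-normalization) to show distinct elements of $\Gamma$ have distinct stabilizers, prove that some point collinear with $(\alpha,\ldots,\alpha)$ differs from it in exactly one coordinate, and then play flag-transitivity off against Lemma~\ref{lemma1}(iii) in the two cases (a) and (b). Nothing of this kind, nor any substitute for it, appears in your proposal, so the cases TW, SD, CD and PA remain unproved and the theorem does not follow from what you have written.
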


Theorem~\ref{th2} is a consequence of the following lemma.

\begin{lemma}\label{prlemma}
If $G$ is a
group of automorphisms of a generalized hexagon or octagon
$\cS=(\cP,\cL,\inc)$ then the following holds.
\begin{enumerate}
\item[(i)] If $G$ is primitive on $\cP$ then the type of $G$ on $\cP$ is
not {\rm HA}, {\rm HS}, {\rm HC}.
Dually, if $G$ is primitive on $\cL$ then the type of $G$ on $\cL$ is
not {\rm HA}, {\rm HS}, or {\rm HC}.
\item[(ii)] If $G$ is flag-transitive and it is primitive on $\cP$ then
the type of $G$ on $\cP$ is not  {\rm PA} or {\rm SD}. Dually,
if $G$ is flag-transitive and it is primitive on $\cL$ then
the type of $G$ on $\cL$ is not  {\rm PA} or {\rm SD}.
\item[(iii)] If $G$ is flag-transitive and it is
primitive on both $\cP$ and $\cL$, then the
O'Nan-Scott type of $G$ on $\cP$ and on $\cL$ is not {\rm SD} or {\rm TW}.
\end{enumerate}
\end{lemma}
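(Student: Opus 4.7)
The three parts are attacked with different tools from Section~3, but the common theme is that the socle $M=\soc G$ must carry enough structure to violate one of the constraints packaged in Lemmas~\ref{21}, \ref{lemma1}, and~\ref{lemma2}.

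For~(i), in each of the types HA, HS, HC the socle contains a minimal normal subgroup $M$ that is regular on $\cP$. The plan is to apply Lemma~\ref{lemma1}(iv) to the transitive automorphism group $M$: its conclusion is that $\cent{\aut\cS}M$ is intransitive on $\cP$. When $G$ has type HA, $M$ is abelian, so $M\subseteq\cent{\aut\cS}M$ is itself transitive -- an immediate contradiction. In the types HS and HC one has $\soc G=M\times M'$ with $M'=\cent{\soc G}M$ another minimal normal subgroup regular on $\cP$, and again $M'\subseteq\cent{\aut\cS}M$ is transitive, contradicting Lemma~\ref{lemma1}(iv). The dual conclusion for $\cL$ is obtained by exchanging the roles of points and lines.

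For~(ii), the flag-transitivity bound $|G|\leq|G_\omega|^{12}$ of Lemma~\ref{lemma1}(v) is combined with the structural features of the stabilizer and the arithmetic from Lemma~\ref{21}. In SD we have $M\cong T^k$ with $k\geq 2$ and $M_\omega$ a full diagonal isomorphic to $T$; Lemma~\ref{lemma2}(b) makes $M_\omega$ self-normalizing in $M$, so $G_\omega\cap M=M_\omega$ and $|G_\omega:M_\omega|$ divides $|\out T|\cdot k!$. The inequality then reads
$$
|T|^k\leq|M|\leq|G|\leq|G_\omega|^{12}\leq\bigl(|T|\cdot|\out T|\cdot k!\bigr)^{12},
$$
which forces $k$ and $|T|$ into a bounded range; the surviving candidates are checked against the divisibility constraints on $|\cP|=|T|^{k-1}$ supplied by Lemma~\ref{21}. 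In PA, $M_\omega=R^k$ for some proper non-trivial $R<T$, so $|\cP|=[T:R]^k$ is a perfect $k$-th power with $k\geq 2$; but the factorisation $(1+s)(1+st+s^2t^2)$ for hexagons -- together with the prime restrictions recalled in the proof of Lemma~\ref{21} (from \cite{Bue-Mal}) -- cannot form a $k$-th power, because the two factors are essentially coprime and carry incompatible prime structures. The analogous argument with $b(\cdot)$ rules out PA for octagons.

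For~(iii), part~(ii) already eliminates SD on $\cP$ (and, dually, on $\cL$), so the new content is the TW case. Here $M\cong T^k$, $k\geq 2$, is regular on $\cP$ with $M_\omega=1$ and $\cent GM=1$, so the centralizer trick from~(i) is not available. The plan is to use primitivity on $\cL$: since $M$ is a non-trivial normal subgroup of the line-primitive group $G$, it must be transitive on $\cL$ as well, and orbit-stabilizer then gives $(1+t)\mid(1+s)$ for hexagons (and the analogous relation for octagons) -- a sharp numerical constraint that would not be available with point-primitivity alone. Combining this with $|\cP|=|T|^k$, with the bound $|G_\omega|\leq|\out T|^k\cdot k!$ coming from $G/M\hookrightarrow\out T\wr\sy k$, with the global inequality $|G|\leq|G_\omega|^{12}$ of Lemma~\ref{lemma1}(v), and with the divisibility constraints from Lemma~\ref{21}, the candidates for $(T,k,s,t)$ reduce to a short list to be killed individually. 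The principal obstacle is exactly this TW case: without a transitive centralizer to invoke, the bound has to come purely from arithmetic, and line-primitivity together with the number-theoretic content of Lemma~\ref{21} must be pushed to their limits to ensure that no configuration survives.
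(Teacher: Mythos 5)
Part~(i) of your proposal is fine and is essentially the paper's own argument (transitive centralizer versus Lemma~\ref{lemma1}(iv)). Parts~(ii) and~(iii), however, are plans whose decisive steps do not work. For SD in~(ii), the displayed inequality $|T|^k\leq\bigl(|T|\cdot|\out T|\cdot k!\bigr)^{12}$ bounds nothing: already for $k=2$ it is satisfied by \emph{every} finite simple group $T$, so there is no ``bounded range'' and no finite list of candidates to test against Lemma~\ref{21}. For PA, the whole burden is carried by the assertion that $(1+s)(1+st+s^2t^2)$ cannot be a perfect $\ell$-th power ``because the two factors are essentially coprime and carry incompatible prime structures''; this is precisely the hard point and is left unproven --- $\gcd(1+s,\,1+st+s^2t^2)$ divides $t^2-t+1$ and need not be $1$, and the prime restrictions quoted from \cite{Bue-Mal} do not by themselves exclude a perfect power. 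The paper does something quite different here: identifying $\cP$ with $\Gamma^\ell$ in product action, it uses Lemma~\ref{lemma2} to show that distinct elements of $\Gamma$ have distinct stabilizers in $N=\soc H$, produces a point $y\sim x$ agreeing with $x$ in all but one coordinate, and then derives a contradiction from flag-transitivity together with Lemma~\ref{lemma1}(iii) and the uniqueness of $x=y\JJoin z$; this geometric argument is what eliminates PA \emph{and} CD. (Note also that the printed ``SD'' in part~(ii) corresponds to ``CD'' in the paper's proof; SD is only disposed of in part~(iii), where primitivity on both $\cP$ and $\cL$ is available, so an SD-elimination from point-primitivity alone is not something the paper establishes either.)

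For~(iii) your treatment of TW stalls exactly where you say it does. The bound $|T|^k\leq|G_x|^{11}\leq\bigl(|\out T|^k\, k!\bigr)^{11}$ does not reduce $(T,k)$ to a short list (any $T$ with $|\out T|^{11}\geq|T|$, e.g.\ $T\cong\alt{6}$, survives for every $k$), and no argument is offered to ``kill the candidates individually,'' so the conclusion is not reached. The paper's proof of~(iii) runs along a completely different line and does close: for an octagon one of $|\cP|,|\cL|$ is odd, while the degree of a primitive group of type SD or TW is a power of $|T|$ and hence even, so $\cS$ is a hexagon; Lemma~\ref{lemma1}(ii), applied to the semiregular simple factor $T_1$ (of even order), forces $\gcd(s,t)=1$; equal types on points and lines would give $|\cP|=|\cL|$ and $s=t$, so without loss of generality the type is SD on $\cP$ and TW on $\cL$, giving $|\cP|=|T|^{k-1}$, $|\cL|=|T|^k$, hence $|T|=(t+1)/(s+1)$ and $t=s|T|+|T|-1$; finally $4$ divides $|T|$ (Feit--Thompson plus the regular-representation parity argument), so $t\equiv 3\pmod 4$, contradicting the fact that $t$ is a square (as $st$ is a square and $\gcd(s,t)=1$). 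Your observation that transitivity of $M$ on $\cL$ yields $(1+t)\mid(1+s)$ is correct but falls far short of this; as it stands, parts~(ii) and~(iii) of your proposal contain genuine gaps.
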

\begin{proof}
Let $\cS$ and $G$ be as assumed in the theorem.
Suppose  further that $G$ is primitive on $\cP$
and let $M$ be a fixed minimal normal subgroup
of $G$. In this case,
$M=T_1\times\cdots\times T_k$ where the $T_i$ are finite
simple groups; let $T$ denote the common isomorphism type of the $T_i$.

(i)
As $M$ is transitive on $\cP$,
Lemma~\ref{lemma1}(iv) implies that $\cent GM$ must
be intransitive. Since $\cent GM$ is a normal subgroup of $G$, we obtain that
$\cent GM=1$.
Hence the O'Nan-Scott type of $G$ on $\cP$ is not HA, HS,
HC. The dual argument proves the dual statement.

(ii)
Assume now that $G$ is flag-transitive and it is primitive on $\cP$.
We claim that the O'Nan-Scott type of $G$ on $\cP$
is not PA or CD. Assume by contradiction that this O'Nan-Scott type is PA or
CD.
In this case $\cP$ can be identified with the Cartesian product $\Gamma^\ell$
in such a way that $G$ can be viewed as a subgroup of the wreath product
$H\wr\sy\ell$ where $H$ is a primitive subgroup of $\sym\Gamma$
and the projection of $G$ into $\sy\ell$ is transitive. Set $N=\soc
H$ and let $\gamma\in\Gamma$.  We must have that $N^\ell$, considered as a
subgroup of $H\wr\sy\ell$, is a subgroup of $G$, and, in fact, $N^\ell=\soc
G=M$.
Further, we have the following two possibilities.
\begin{itemize}
\item[PA] If the type of $G$ is PA then the type of $H$ is AS and we have that
$N\cong T$, $\ell=k$ and $N_\gamma$ is a proper subgroup of $N$.
\item[CD] If the type of $G$ is CD,
then the type of $H$ is SD, $N\cong T^s$ where $s\geq 2$ and $s=k/\ell$.
In this case,
$N_\gamma$ is a diagonal subgroup in $N$
which is isomorphic to $T$.
\end{itemize}

Since $H$ is primitive on $\Gamma$,
the normal subgroup $N$  must be
transitive on $\Gamma$. If $\gamma\in\Gamma$, then $H_\gamma$ is a
maximal subgroup of $H$. Thus Lemma~\ref{lemma2} implies that $\norm
N{N_\gamma}=N_\gamma$ (part~(a) of the lemma applies in the PA case, and 
part~(b) applies in the CD case).
Suppose that $\gamma,\ \delta\in\Gamma$ such that
$N_\gamma=N_\delta$. Then there is $n\in N$ such that $\gamma n=\delta$, and
$(N_\gamma)^n=N_\delta=N_\gamma$. Hence $n$ normalizes $N_\gamma$, and so
$n\in N_\gamma$, and we obtain that $\gamma=\delta$. Therefore different points of
$\Gamma$ must have different stabilizers in $N$.

Let $\alpha$ be an arbitrary element of $\Gamma$ and consider the
point $x\in\cP$ represented by the $\ell$-tuple
$(\alpha,\alpha,\ldots,\alpha)$. We claim first that there exists
a point $y\sim x$ such that every entry of the $\ell$-tuple
representing $y$ is equal to $\alpha$, except for one entry.
Indeed, let $y$ be any point collinear with $x$. Then, if the
claim were not true, we may assume without loss of generality that
$y$ is represented by $(\beta_1,\beta_2,\ldots)$, where
$\beta_1\neq\alpha\neq\beta_2$. By the argument in the previous paragraph,
the stabilizers in $N$ of $\alpha$ and $\beta_1$ are distinct, and so there exists $g\in N_\alpha$ such that
$\beta_1':=\beta_1g\neq\beta_1$. Put
$\overline{g}=(g,1_N,1_N,\ldots,1_N)$ ($\ell$ factors) and
$y'=y{\overline{g}}$. Let $h\in N_{\beta_2}$ be such that
$\alpha':=\alpha h\neq\alpha$ (such an $h$ exists by
the argument in the previous paragraph). Put
$\overline{h}=(1_N,h,1_N,1_N,\ldots,1_N)$ ($\ell$ factors), and put
$x'=x{\overline{h}}$. Then $x'\neq x$, and both $y$ and $y'$ are
fixed under $\overline{h}$. Since $y\sim x\sim y'$, we deduce
$y\sim x'\sim y'$. This implies (because the gonality of $\cS$ is
at least $5$) that $x,x',y,y'$ are incident with a common line.
But all entries, except the second, of $x'$ are equal to $\alpha$.
Thus our claim is proved.

So we may pick $y\sim x$ with
$y=(\beta,\alpha,\alpha,\ldots,\alpha)$ ($\ell$ entries) and $\beta\neq
\alpha$. By the flag-transitivity, there exists $g\in G_x$ mapping
$y$ to a point not collinear with $y$. There are two
possibilities.

\begin{enumerate}
\item[(a)] We can choose $g$ such that the first entry of $yg$ is
equal to $\alpha$.
\item[(b)] For every such $g$, the first entry of $yg$ differs from
$\alpha$.
\end{enumerate}

In case~(a), as $x=(\alpha,\ldots,\alpha)$ and $g\in G_x$,
we may suppose without loss of generality
that $y':=yg=(\alpha,\beta',\alpha,\ldots,\alpha)$. Choose
$h,h'\in N$ such that $\alpha h=\beta$, and $\alpha {h'}=\beta'$.
Put $\overline{h}=(h,1_N,\ldots,1_N)$ and
$\overline{h}'=(1_N,h',1_N,\ldots,1_N)$. Then $\overline{h}$ and
$\overline{h}'$ commute and Lemma~\ref{lemma1}(iii) implies that
$x=x{\overline{h}\overline{h}'}$. Hence $\alpha=\beta=\beta'$, a
contradiction.

In case~(b), we consider an arbitrary such $g$ and put $z=yg$. Also,
consider an arbitrary $g'\in G_x$ not preserving the first
component of $\Gamma\times\Gamma\times\cdots\times\Gamma$. By
assumption, $y{g'}$ is incident with the line through $x$ and
$y$, and we put $z'=y{g'}$. If we now let $y$ and $y'$ in the
previous paragraph play the role of $z$ and $z'$, respectively, of
the present paragraph, then we obtain a contradiction again.

Thus we conclude that the type of $G$ on $\cP$ is not PA or CD
and the dual statement can be verified using the dual argument.

(iii) Suppose that $\cS$ is a generalized hexagon or octagon and $G$ is a
group of automorphisms such that $G$ is flag-transitive and $G$ is primitive
on $\cP$ and $\cL$ of type either SD or TW.
First we claim that
$\cS$ must be a generalized hexagon and
$\gcd(s,t)=1$. If $\cS$ is a generalized octagon with order $(s,t)$,
then either $|\cP|$ or $|\cL|$ must be odd. However,
the degree of a primitive group with type SD or
TW is the size of a minimal normal subgroup,
which is even, as it is a power of the size of a
non-abelian finite simple group.
Therefore $\cS$ must be a hexagon as claimed.
Assume now by contradiction that $\gcd(s,t)\neq 1$ and
consider the subgroup $T_1$ of the socle $M$. Since $G$ is either SD or TW on
$\cP$ and also on $\cL$ we have that $T_1$ is semiregular on both $\cP$ and on
$\cL$. However, as $T_1$ is a non-abelian finite simple group, $T_1$ has even
order, and this is a contradiction, by Lemma~\ref{lemma1}(ii).

So we may suppose for the remainder of this proof that $\cS$ is a generalized hexagon with parameters $(s,t)$
such that $\gcd(s,t)=1$. Note that the number of lines is $(t+1)(1+st+s^2t^2)$, and the number of points is
$(s+1)(1+st+s^2t^2)$. If $G$ has the same O'Nan-Scott type on the set of points and the set of lines, then
$|\cP|=|\cL|$, which implies $s=t$. Since $\gcd(s,t)=1$, this is impossible, and we may assume without loss of
generality that the type of $G$ is SD on $\cP$ and it is TW on $\cL$. Hence
$|\cP|=(s+1)(1+st+s^2t^2)=|T|^{k-1}$ and $|\cL|=(t+1)(1+st+s^2t^2)=|T|^{k}$. 
Thus $|T|=(t+1)/(s+1)$ and so
$t=s|T|+|T|-1$.

We digress in this paragraph to show that 
the order of the non-abelian finite simple group $T$ is
divisible by~4. It seems to be well-known that 
this assertion follows immediately from the
Feit-Thompson Theorem which states that $|T|$ is even. The following simple
argument was showed to us by Michael Giudici in private communication. 
Recall that the right-regular
representation $\varrho$ of $T$ is a homomorphism from $T$ to $\sym T$ that
maps $t\in T$ to the permutation $\varrho(t)\in\sym T$ where $\varrho(t)$ is
defined by the equation
$x\varrho(t)=xt$ for all $x\in T$. It is easy to see that $\varrho(T)$ is a
regular subgroup of $\sym T$; that is $\varrho(T)$ is transitive, and, for 
all $t\in T\setminus\{1\}$,
$\varrho(t)$ has no fixed-points.
Now $\varrho(T)\cong T$ and 
$\varrho(T)\cap \Alt T$ is a normal
subgroup of $\varrho(T)$ with index at most 2. Thus $\varrho(T)\leq \Alt T$,
and so every element of $\varrho(T)$ is an even permutation on $T$. 
By the
Feit-Thompson Theorem referred to above, we have that $T$ contains an
involution $g$. Since $\varrho(g)$ is also an involution, it must be the
product of disjoint transpositions. As 
$\varrho(g)$ is an even permutation, the number of
transpositions in $\varrho(g)$ must be even. Further, as $\varrho(g)$ has no fixed-points,
every element of $T$ must be involved in precisely one of  these
transpositions. This implies that $4\mid|T|$, as claimed.

We now continue with the main thrust of the proof. In order to derive a
contradiction, we show that the equations for  $s$, $t$ and $|T|$ above imply
that $4\nmid |T|$. Indeed, note that $st$ is a square, and so, as
$\gcd(s,t)=1$, we have that $t$ must be a
square. If $4$ divides $|T|$ then
$t=s|T|+|T|-1\equiv 3\pmod 4$.
However, $3$ is not a square modulo $4$, which gives the desired
contradiction. Hence, in this case, $G$ cannot be primitive with type SD or TW.
\end{proof}

The reader may wonder whether it is possible for an abstract group 
$G$ to have two faithful primitive permutation actions, one with type TW and
one with type SD. Gross and Kov\'acs in~\cite{gk} show that if $G$ is a
twisted wreath product of $\alt 5$ and $\alt 6$ where the twisting subgroup in
$\alt 6$ is
isomorphic to $\alt 5$, then $G$ is isomorphic to the straight wreath
product $\alt 5\wr \alt 6$. Hence in this case $G$ can be a primitive 
permutation group of type TW and also of type SD.

Now we can prove Theorem~\ref{th2}.

\medskip

\noindent{\sc Proof of Theorem~\ref{th2}.}
Suppose that $G$ is a point-primitive, line-primitive and flag-transitive
group of automorphisms of a generalized hexagon or octagon.
Using parts~(i)--(iii) of Lemma~\ref{prlemma}, we obtain that the type of
$G$ on both the points and lines
must be AS. In particular $G$, as an abstract group,  must be almost simple.
\hfill$\Box$

\section{Hexagons and octagons with an almost simple automorphism group}

In this section we prove the following theorem.

\begin{theo}\label{mainas}
If $\cS$ is a generalized hexagon or octagon and
$G$ is a flag-transitive and point-primitive automorphism group of $\cS$,
then $G$ is not isomorphic to an alternating or symmetric group with degree
at least $5$.
\end{theo}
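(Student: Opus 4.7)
\medskip
\noindent\textbf{Proof proposal.}
The plan is a case analysis by contradiction. Suppose $G\in\{\alt n,\sy n\}$ with $n\geq 5$ acts flag-transitively and point-primitively on $\cS=(\cP,\cL,\inc)$ of order $(s,t)$, and set $H:=G_x$ for some $x\in\cP$. Then $H$ is a maximal subgroup of $G$, the flag-transitivity of $G$ makes $H$ transitive on the $t+1$ lines through $x$ (so $(t+1)\mid|H|$), and
$[G:H]=(1+s)(1+st+s^2t^2)$ in the hexagon case or $[G:H]=(1+s)(1+st)(1+s^2t^2)$ in the octagon case, with $st$ (respectively $2st$) a perfect square. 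Lemma~\ref{lemma1}(v) supplies the essential bound $|G|\leq|H|^{12}$, while Lemma~\ref{21} severely restricts the prime factorisation of $[G:H]$.

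I would then split according to the type of the maximal subgroup $H$ in $\sy n$: (a) $H\cap\sy n\cong\sy k\times\sy{n-k}$ with $1\leq k<n/2$ (intransitive); (b) $H\cap\sy n\cong\sy k\wr\sy m$ with $n=km$, $k,m\geq 2$ (transitive imprimitive); (c) $H$ primitive. Case (c) is the lightest: standard bounds on the order of a primitive subgroup of $\sy n$ distinct from $\alt n$ (of Praeger--Saxl or Mar\'oti flavour, roughly $|H|\leq n^{1+\log_2 n}$) combined with $|G|\leq|H|^{12}$ bound $n$ above by an explicit small constant. The finitely many residual pairs $(n,H)$ are then checked directly against the polygon index equation, the square condition on $st$ or $2st$, the divisibility $(t+1)\mid|H|$, and Lemma~\ref{21}.

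For case (a), $[G:H]=\binom{n}{k}$ or $2\binom{n}{k}$. The factor $1+st+s^2t^2$ (hexagon), respectively $(1+st)(1+s^2t^2)$ (octagon), contains only primes in a narrow residue class modulo $3$ (respectively modulo $4$), as used in the proof of Lemma~\ref{21}. Every prime of $[G:H]$ outside this residue class must therefore be carried by the remaining factor $1+s$. For a binomial coefficient $\binom{n}{k}$ this is extremely restrictive, since the ``forbidden'' primes (notably $2$ in the octagon case and primes $\equiv 2\pmod 3$ in the hexagon case) typically divide $\binom{n}{k}$ to a power growing with $n$, which in turn forces $s$ to be bounded below by something close to $[G:H]^{\text{const}}$. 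Combining this with the upper bound on $s$ coming from the polygon identity, the divisibility $(t+1)\mid k!(n-k)!$, and the square condition on $st$ should eliminate case (a) subcase by subcase in $k$. Case (b) is handled in the same spirit, using $[G:H]=n!/(k!^m\,m!)$, where the additional factor $m!$ further constrains the residue structure.

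The main obstacle is the intransitive case with $k$ small compared to $n$, and in particular $k=1$: there $[G:H]=n$ or $2n$ and $|H|$ is as large as $(n-1)!$, so Lemma~\ref{lemma1}(v) is slack and virtually the entire argument depends on the tight Diophantine identities
\[
n=(1+s)(1+st+s^2t^2)\qquad\text{or}\qquad n=(1+s)(1+st)(1+s^2t^2),
\]
together with the square condition on $st$ or $2st$ and the divisibility $(t+1)\mid(n-1)!$. Showing that these have no simultaneous solutions for any $n\geq 5$ is the technical heart of the argument, and would be carried out by the residue analysis of Lemma~\ref{21} applied directly to $n$, together with parity considerations arising from the square conditions.
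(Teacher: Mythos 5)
Your primitive case is essentially the paper's argument (Lemma~\ref{lemma1}(v) plus a Mar\'oti-type bound to force small degree, then a finite check against Lemma~\ref{21}), but your treatment of the intransitive and imprimitive cases has a genuine gap, and in its most critical subcase it cannot work as described. Take $k=1$: then $\cP$ is the natural $n$-point set, $[G:G_x]=n$, and the action is $2$-transitive. With $s=t=2$ one has $n=(1+s)(1+st+s^2t^2)=63$, $st=4$ is a perfect square, and $(t+1)=3$ divides $|G_x|=|\alt{62}|$ or $|\sy{62}|$; every arithmetic condition you list (the index identity, the square condition, the divisibility $(t+1)\mid|G_x|$, and the residue constraints feeding Lemma~\ref{21}) is satisfied. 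So no amount of residue or parity analysis ``applied directly to $n$'' can eliminate this case; what kills it is geometry, namely that a point-transitive group on a generalized hexagon or octagon has permutation rank at least $4$ on $\cP$ (the distance classes $0,2,4,6(,8)$ in the incidence graph are unions of orbitals), which rules out $k=1,2$ outright. The paper uses exactly this rank observation to assume $k\geq 3$, and you never invoke it.

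For $k\geq 3$ your plan is also only heuristic: the claim that primes in the forbidden residue classes divide $\binom{n}{k}$ to a power growing with $n$ is unproved and would amount to a nontrivial theorem about the multiplicative structure of binomial coefficients in residue classes, with infinitely many pairs $(n,k)$ to control; you give no mechanism for carrying it out ``subcase by subcase in $k$.'' The paper avoids this entirely: in the intransitive and imprimitive cases it labels points by $k$-subsets (respectively partitions into blocks of size $k$), takes a collinear point $y\sim x$ whose label meets that of $x$ maximally, and exhibits explicit permutations in $G$ fixing two non-collinear points $y,z$ but moving $x=y\JJoin z$, contradicting the uniqueness of the join (gonality $\geq 5$); flag-transitivity is used only to move $y$ off the line $xy$. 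In short, the arithmetic machinery (Lemmas~\ref{21}, \ref{lemma1}(v), \ref{smallprim}) is reserved for the primitive point-stabilizer case, where it suffices, while the intransitive and imprimitive cases require the combinatorial join argument that your proposal is missing.
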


Our strategy to prove Theorem~\ref{mainas} is to show that a maximal subgroup
of an alternating or symmetric group cannot be a point-stabilizer.
To carry out this strategy, we need some arithmetic results about the maximal
subgroups of $\alt n$ and $\sy n$.

\begin{lemma}\label{107}
If $n\in\N$ and $n\geq 107$ then
\begin{equation}\label{est1}
n^{12+12\lfloor\log_2 n\rfloor}\leq n!/2.
\end{equation}
\end{lemma}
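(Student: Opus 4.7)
My plan is to take natural logarithms of both sides and apply Stirling's inequality $\ln n!\geq n\ln n-n+\tfrac12\ln(2\pi n)$, so that, writing $k=\lfloor\log_2 n\rfloor$, the claim reduces to
\[
(12+12k)\ln n+\ln 2\leq n\ln n-n+\tfrac12\ln(2\pi n).
\]
I would then argue by induction on $n$ starting from the base case $n=107$, where $k=6$ and the exponent is $84$. The base case follows by a direct numerical check using the Stirling bound above: the left-hand side is about $393.2$, versus a right-hand side of about $396.3$, so the inequality holds with roughly three units of slack.

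For the inductive step I distinguish two sub-cases depending on whether $\lfloor\log_2(n+1)\rfloor$ equals $k$ or $k+1$. In the ``non-jump'' case $n+1<2^{k+1}$, the inductive hypothesis $n^{12+12k}\leq n!/2$ gives, after multiplication by $n+1$, that it suffices to show $(1+1/n)^{12+12k}\leq n+1$. Using $(1+1/n)^m\leq e^{m/n}$ together with the elementary observation that $(12+12k)/n$ is maximized within our range at $n=107$ (where it equals $84/107<0.79$) and decreases both within each dyadic block and across jumps, the left-hand side is bounded above by $e^{0.79}<2.2$, which is far below $n+1\geq 108$.

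The main obstacle is the ``jump'' case $n+1=2^{k+1}$, where the exponent $12+12\lfloor\log_2 m\rfloor$ as a function of $m$ increases by $12$ at $m=n+1$, and the previous argument breaks down because $n^{12+12k}$ is replaced by $(n+1)^{12+12(k+1)}$. At each such isolated value I would instead verify the target inequality at $n+1=2^{k+1}$ directly from Stirling, which reduces to
\[
(24+12k)(k+1)\ln 2+\ln 2\leq 2^{k+1}(k+1)\ln 2-2^{k+1}+\tfrac12\ln(\pi\cdot 2^{k+2}).
\]
Since the left-hand side is only quadratic in $k$ while the dominant term $2^{k+1}(k+1)\ln 2$ on the right grows like $k\cdot 2^{k+1}$, this holds with ample slack for every $k\geq 6$ (the case $k=6$ can be checked by hand, and for $k\geq 7$ the gap between the two sides more than doubles at each step). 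Combining these three ingredients — the base case, the within-range inductive step, and the separate verification at powers of $2$ — completes the induction and establishes the lemma for all $n\geq 107$.
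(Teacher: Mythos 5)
Your argument is correct, but it follows a genuinely different route from the paper. The paper does not induct on $n$ at all: it checks the inequality directly for all $n\in\{107,\ldots,208\}$, and for $n\geq 209$ it uses the chain $n^{12+12\lfloor\log_2 n\rfloor}\leq n^{n/2}\leq (n/e)^n\leq n!/2$, where the last inequality comes from Stirling's formula, the middle one is proved by a small separate induction valid for $n\geq 8$, and the first reduces to $12+12\log_2 n\leq n/2$, verified at $n=209$ and extended by concavity. Your proof instead inducts on $n$ from the single base case $n=107$, multiplying the inductive hypothesis by $n+1$ and bounding $(1+1/n)^{12+12k}\leq e^{0.79}$ in the ``non-jump'' case, and re-verifying the inequality directly from the Stirling lower bound at each power of $2$, where the exponent jumps by $12$. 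What your approach buys is economy of computation: one numerical base case plus a one-parameter family of dyadic checks dominated by an exponential-versus-quadratic comparison, as opposed to the paper's machine check of roughly a hundred values of $n$; the price is that you must track the floor function through the induction and handle the jumps separately, whereas the paper's crude bound $n^{12+12\lfloor\log_2 n\rfloor}\leq n^{n/2}$ sidesteps the floor entirely (at the cost of only working from $n\geq 209$ onward, which is exactly why the finite check up to $208$ is needed there). Two small points you should tighten: the assertion that $(12+12\lfloor\log_2 n\rfloor)/n$ ``decreases across jumps'' is not literally true pointwise (it rises from $84/127$ to $96/128$ at $n=128$); what is true, and all you need, is that the per-block maxima $84/107,\,96/128,\,108/256,\ldots$ are all below $0.79$. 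Likewise, the claim that the gap in the jump case ``more than doubles at each step'' deserves a one-line justification, e.g.\ that the right-hand side exceeds $2^{k+1}\bigl((k+1)\ln 2-1\bigr)$ while the left-hand side is at most $12(k+2)(k+1)\ln 2+\ln 2$, and the former outgrows the latter for all $k\geq 6$.
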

\begin{proof}
Checking the numbers between~107 and~208,
we can see that~\eqref{est1} holds for all $n\in\{107,\ldots,208\}$.
So suppose without loss of generality in the remaining of this proof
that $n$ is at least  $209$.
The Stirling Formula gives, for each $n\geq 1$,
that there is $\vartheta_n\in[0,1]$ such that
$n!=(n/e)^n\sqrt{2\pi n}e^{\vartheta_n/(12n)}$ (see~\cite[Theorem~2, Chapter~XII]{lang}),
which gives that $(n/e)^n\leq n!/2$.
We claim that $n^{n/2}\leq (n/e)^n$ for $n\geq 8$. Easy calculation shows that
the inequality holds for $n=8$. We
assume that it holds for some $n$ and prove by induction that it holds for
some $n+1$. Let us compute that
$$
\left(\frac{(n+1)^{(n+1)/2}}{n^{n/2}}\right)^2=\frac{(n+1)^{n+1}}{n^n}=
(n+1)\left(\frac{n+1}{n}\right)^n
$$
and
$$
\left(\frac{\left((n+1)/e\right)^{n+1}}{(n/e)^{n}}\right)^2=e^{-2}\left(\frac{(n+1)^{n+1}}{n^n}\right)^2=
e^{-2}(n+1)^2\left(\frac{n+1}{n}\right)^{2n}.
$$
This shows that
$$
\frac{(n+1)^{(n+1)/2}}{n^{n/2}}\leq
\frac{\left((n+1)/e\right)^{n+1}}{(n/e)^{n}},
$$
and the assumption that $n^{n/2}\leq (n/e)^n$ gives
the claimed inequality for $n+1$. Therefore it suffices to show that
$n^{12+12\lfloor\log_2 n\rfloor}\leq n^{n/2}$, and, in turn, we only have to show
  that $12+12\log_2 n\leq n/2$ for $n\geq 209$. Again, easy computation shows
  that the inequality holds for $n=209$. Since $x\mapsto 12+12\log_2 x$ is a
  concave function and $x\mapsto x/2$ is a linear function, the inequality
  must hold for all $n\geq 209$.
\end{proof}

\begin{lemma}\label{smallprim}
Suppose that $G$ is an alternating or symmetric group with degree $n$ ($n\geq
5$) and
$H$ is a primitive and maximal subgroup of $G$ such that $|H|^{12}\geq |G|$.
Then $G$ and $H$ must be as one of the groups
in the table of Appendix~\ref{app1}.
\end{lemma}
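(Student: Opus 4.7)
The plan is to combine the arithmetic bound of Lemma~\ref{107} with the known classification of ``large'' primitive permutation groups due to Liebeck. Recall that for any primitive subgroup $H\leq\sym n$ not containing $\alt n$, either
\begin{enumerate}
\item[(a)] $|H|\leq n^{1+\lfloor\log_2 n\rfloor}$, or
\item[(b)] $H$ lies in an explicit short list of exceptions: the Mathieu groups $M_{11}$, $M_{12}$, $M_{23}$, $M_{24}$ in their natural actions, and product-action groups satisfying $(\alt m)^k \trianglelefteq H\leq \sym m\wr\sy k$ acting on $\binom{m}{r}^k$ points for suitable $m,k,r$.
\end{enumerate}

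First I would dispose of case~(a): if $n\geq 107$, Lemma~\ref{107} gives
$$|H|^{12}\leq n^{12+12\lfloor\log_2 n\rfloor}\leq n!/2\leq |G|,$$
contradicting the hypothesis $|H|^{12}\geq |G|$. Thus $n\leq 106$ in this case, and I would then consult a standard catalogue of primitive permutation groups of small degree (for instance the GAP primitive groups library, or the tables of Dixon--Mortimer) to enumerate the primitive maximal subgroups of $\alt n$ and $\sym n$ for $5\leq n\leq 106$ and retain only those $H$ that satisfy $|H|^{12}\geq |G|$. Some care is required here to distinguish between maximal subgroups of $\sym n$ and of $\alt n$, since primitive maximality in one of them does not automatically transfer to the other; this is the standard O'Nan--Scott subtlety for alternating/symmetric groups. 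The surviving pairs $(G,H)$ populate the table in Appendix~\ref{app1}.

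Next I would handle case~(b). The four Mathieu exceptions have degree $n\in\{11,12,23,24\}$, so they already fall inside the range $n\leq 106$ addressed above. The product-action exceptions require a direct estimate: from $n=\binom{m}{r}^k$ and $|H|\leq (m!)^k\cdot k!$ one obtains, for $m$ or $k$ large, that $|H|^{12}$ is vastly smaller than $n!/2$, so only finitely many parameter triples $(m,r,k)$ survive, and each one yields a degree $n$ that is again absorbed into the small-degree enumeration.

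The principal obstacle is the enumeration of primitive maximal subgroups for the finitely many degrees $n\leq 106$ and the bookkeeping needed to sift them by the inequality $|H|^{12}\geq |G|$ together with maximality in the correct overgroup ($\alt n$ versus $\sym n$). This is a finite, mechanical task best carried out with computer algebra, but it is unavoidable because the bound in (a) is not sharp in the small-degree regime, and Liebeck's list in (b) must be intersected with the actual maximal-subgroup lattice of $\alt n$ and $\sym n$ to produce a clean final list.
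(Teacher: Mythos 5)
Your proposal follows essentially the same route as the paper: Mar\'oti's order bound (which you attribute to Liebeck) combined with Lemma~\ref{107} forces the degree $n\leq 106$ outside the Mathieu and subset/product-action exceptions, those exceptions are disposed of by a finite arithmetic estimate, and the surviving pairs are obtained by a computer search through the primitive groups of degree at most $106$, filtered by $|H|^{12}\geq |G|$ and by maximality in $\alt{n}$ versus $\sy{n}$. The only substantive difference is that the paper carries out the product-action estimate explicitly, via a prime-valuation argument showing $(k!)^{12\ell}(\ell!)^{12}<(k^\ell)!/2$ unless $(k,\ell)\in\{5,\ldots,10\}\times\{2\}$, whereas you leave that step as a routine finite check.
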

\begin{proof}
Suppose that $H$ is a primitive and maximal subgroup of $G$.
Using the classification of maximal subgroups of the alternating and symmetric
groups~\cite{max} and Mar\'oti's Theorem~\cite[Theorem~1.1]{maroti}, we have that one of the following
must hold:
\begin{enumerate}
\item[(1)] $n=k^\ell$ for some $k\geq 5$ and $\ell\geq 2$ and
$H$ is permutationally
isomorphic to $(\sy k\wr\sy\ell)\cap G$ in product action;
\item[(2)] $G$ is isomorphic to $\mat n$ for $n\in\{11,12,23,24\}$
in its $4$-transitive action;
\item[(3)] $|G|< n^{1+\lfloor\log_2 n\rfloor}$.
\end{enumerate}

Suppose that case~(1) is valid and let
$H$ be permutationally isomorphic to the group
$(\sy k\wr \sy\ell)\cap G$ in product action for some $k\geq 5$ and $\ell\geq 2$.
Then we obtain that
$$
|H|^{12}\leq (k!)^{12\ell}\cdot(\ell!)^{12}.
$$
We claim that $(k!)^{12\ell}\cdot(\ell!)^{12}< (k^\ell)!/2$ except for finitely
many pairs $(k,\ell)$. First note that all primes $p$ dividing
$(k!)^{12\ell}\cdot(\ell!)^{12}$ will also divide $(k^\ell)!/2$.
For an integer $x$, let $|x|_p$ denote the largest non-negative
integer $\alpha$ such that $p^\alpha|x$.
It suffices
to show that, there are only finitely many pairs $(k,\ell)$
such that
$|(k!)^{12\ell}\cdot(\ell!)^{12}|_{p}\geq
|(k^\ell)!/2|_p$, where $p$ is an arbitrary prime which is not greater than
$\max\{k,\ell\}$.
It is routine to check that if $x$ is an integer then
\begin{equation}\label{eq1}
|x!|_p=\sum_{u=1}^\infty
\left\lfloor\frac{x}{p^u}\right\rfloor\leq\sum_{u=1}^\infty
\frac{x}{p^u}=\frac{x}{p}\sum_{u=0}^\infty\frac{1}{p^u}=\frac{x}{p}\cdot\frac{p}{p-1}
=\frac{x}{p-1}.
\end{equation}
Thus
$$
|(k!)^{12\ell}\cdot(\ell!)^{12}|_{p}\leq
12\ell\frac{k}{p-1}+12\frac{\ell}{p-1}=\frac{12\ell k+12\ell}{p-1}\leq
\frac{24\ell k+24\ell}{p}
$$
Clearly, $k^\ell\geq 8$. Further, as $k\geq 5$, $\ell\geq 2$, and $p\leq \max\{k,\ell\}$, we
obtain that $p^2\leq k^\ell$. Hence we obtain from the first equality
in~\eqref{eq1} that
$$
|(k^\ell)!/2|_p\geq \frac{k^\ell}{p}.
$$
Routine computation shows  that the set of pairs $(k,\ell)$
for which $k\geq 5$ and $\ell\geq 2$ and
$24\ell k+24\ell\geq k^\ell$ is
$\{5,\ldots,48\}\times\{2\}
\cup \{5,\ldots,8\}\times\{3\}$.
Then checking
finitely many possibilities it is easy to compute that
$(k!)^{12\ell}\cdot(\ell!)^{12}\geq (k^\ell)!/2$
if and
only if $(k,l)\in\{5,\ldots,10\}\times\{2\}$. In particular, the degree of $H$
is at most~100.

(2) Easy computation shows that $|\mat n|^{12}\geq n!/2$ for
$n\in\{11,12,23,24\}$.

(3) Lemma~\ref{107} shows that
if $n\geq 107$ then $n^{12+12\lfloor\log_2n\rfloor}\leq
n!/2$. Hence if $n\geq 107$ and $H$ is a maximal subgroup  of $\alt n$ or
$\sy n$ which is as in part~(3) of the theorem, then
$|H|^{12}< n!/2$.
Thus, in this case,
the degree of $H$ must be at most 106.

Summarizing the argument above: 
if $H$ is a primitive maximal subgroup of $G$ such that $|H|^{12}\geq |G|$ then
the degree of $H$ is at most~106. It remains to prove that $H$ must be one of the groups in the table in
Appendix~\ref{app1}. Various classifications of primitive groups of small degree can be found in the
literature; for convenience we use the classification
 by Roney-Dougal~\cite{rd}, as it
can be accessed through the computational algebra system {\sf GAP}~\cite{gap}.
In what follows we explain how we obtained the table in Appendix~\ref{app1}
using the {\sf GAP} system.
First, for a fixed $n\in\{5,\ldots,106\}$, 
let $P_n$ denote the list of of primitive groups
with degree $n$. For  $H\in P_n$ we check whether or not
$H\leq \alt n$. Then we check whether $|H|^{12}\geq |G|$ where $G$ is either
$\alt n$ (if $H\leq\alt n$) or $\sy n$ (otherwise).  If $H$ satisfies this
condition then we keep it in $P_n$, otherwise we erase it from $P_n$. 
The next step is to
eliminate those groups which are clearly not maximal subgroups in $\alt n$ or
$\sy n$. If $H_1,\ H_2\in P_n$ such that $H_1,\ H_2\leq \alt n$ and  $H_1<H_2$
then $H_1$ is erased from $P_n$.
Similarly, if $H_1,\ H_2\not \leq \alt n$ such that $H_1<H_2$,
then $H_1$ is thrown away. We do this calculation for all
$n\in\{5,\ldots,106\}$ and the subgroups $H$ that we obtain are in
Appendix~\ref{app1}.
\end{proof}

Let us note that Lemma~\ref{smallprim} is not an ``if and only if''
statement. Indeed, the table in the appendix may be
redundant in the sense, that a subgroup in the table may not be
maximal in $\alt n$ or $\sy n$. 

Let us now prove Theorem~\ref{mainas}.

\bigskip

\noindent{\sc Proof of Theorem~\ref{mainas}}.
Suppose that $\cS=(\cP,\cL,\inc)$ is a generalized hexagon or octagon and $G$
is a point-primitive, flag-transitive automorphism group of $\cS$
such that $G$ is
isomorphic to $\alt n$ or $\sy n$ with some $n\geq 5$.
By Buekenhout and Van
Maldeghem \cite{Bue-Mal}, we may assume that $n\geq 14$.
Let $x\in\cP$. Then $G_x$, as a subgroup of $\sy n$,
is either intransitive, or it is
transitive and imprimitive, or it is primitive. We consider these three cases below.

{\em $G_x$ is intransitive.} Here, $G_x$ is the stabilizer 
in $G$ of a partition of the underlying set into two
blocks, one with size $k$ and one with size $\ell$, where $k+\ell=n$,
$k\neq\ell$. Let us also allow here the case when $k=\ell$, though in this
case $G_x$ may not be intransitive. Assume without loss of
generality that $k\leq \ell$.
Then $G_x$ contains a
subgroup 
isomorphic to $(\alt k\times\alt \ell)\rtimes C_2$. Hence the
points of $\cS$ can be labelled with the subsets of $\{1,2,\ldots,n\}$ of size $k$. We may label $x$ as
$\{1,2,\ldots,k\}$. Let $k_1<k$ be maximal with the property that there is a point $y$ of $\cS$ collinear with
$x$ and the label of $y$ intersects $\{1,2,\ldots,k\}$ in $k_1$ elements. Without loss of generality, we may
assume that $y\sim x$ has label $\{1,2,\ldots,k_1,k+1,\ldots,2k-k_1\}$. First suppose that $k_1=k-1$. Note
that, since the permutation rank of $G$ is at least $4$, we may assume $k\geq 3$. By transitivity of $G_x$ on
$\{1,2,\ldots,k\}$, and by transitivity of the pointwise stabilizer of $\{1,2,\ldots,k\}$ on the complement
$\{k+1,k+2,\ldots,n\}$, every point with a label sharing exactly $k-1$ elements with $\{1,2,\ldots,k\}$ is
adjacent with $x$. An arbitrary element $g$ of $G_x$ now maps $y$ onto a point $y'$ with label, without loss of
generality, either $\{1,2,\ldots,k-1,k+2\}$ or $\{2,3,\ldots,k,k+1\}$ or $\{2,3,\ldots,k,k+2\}$. In the first
two cases $y'$ is collinear with $y$. Since, by flag-transitivity, we can choose $g$ such that it does not
preserve the line $xy$, and hence does not map the point $y$ onto a collinear point, we may assume that the
point $y'$ with label $\{2,3,\ldots,k,k+2\}$ is not collinear with $y$, and hence has distance $4$ to $y$ (in
the incidence graph). But now the automorphism $(1\;k+1)(k\;k+2)$ (if $k'=k+1$) fixes both $y$ and $y'$, but
not $x=y\JJoin y''$. Hence $k_1<k-1$.

Now the automorphism $(k-1\;k)(k+1\;2k-k_1+1)$ belongs to $G_x$ and maps $y$ to a point $z$ whose label shares
$k-1$ elements with $y$. Hence $z$ cannot be collinear with $y$ (otherwise, mapping $y$ to $x$, the image of
$z$ produces a point with a label contradicting the maximality of $k_1$ which is less than $k-1$). On the other
hand, $z$ is collinear with $x$. If $k_1>0$, then the automorphism $(1\; k+2)(k-1\;k)$ belongs to $G$,
preserves $y$ and $z$, but not $x=y\JJoin z$. Now suppose that $k_1=0$. If $2k+1<n$, then the automorphism
$(1\;2k+2)(2\;3)$ fixes $y$ and $z$, but not $x=y\JJoin z$, a contradiction. If $2k+1=n$, then, by the
maximality of $k_1$, and the transitivity of $\alt k$, we see that there are precisely $k+1$ points collinear
with $x$ on which $G_x$ acts $2$-transitively. This easily implies that either $s=1$ or $t=0$, either way a
contradiction.

{\em $G_x$ is imprimitive.} Here $G_x$ is the stabilizer of a partition of the underlying set into $\ell$
blocks each with size $k$. Let $x$ be a point of $\cS$, which we may assume without loss of generality to
correspond to the partition of $\{1,2,\ldots,n\}$ into $\ell$ subsets of size
$k$ given by
$\{ik+1,ik+2,\ldots,ik+k\}$, $0\leq i<\ell$. We may assume that $\ell>2$, the case $\ell=2$ being completely
similar to the intransitive case, as noticed above.
(If $\ell=2$ then, as the number of point is greater that $4$, we may also assume that $k\geq 3$). 

We first claim that there is some point $y\sim x$ such that $y$
corresponds to a partition sharing at least one partition class
with $x$ (we will identify the points with their corresponding
partition). Let $y$ be any point collinear with $x$ and suppose
that $y$ has no partition class in common with $x$. If $k=2$, then
$\ell>6$ and so the automorphism $(1\;2)(3\;4)$ destroys at most 4
classes of $y$, while it fixes $x$. Hence the image $z$ of $y$ has
at least three classes $\{i_1,i_2\},\{i_3,i_4\},\{i_5,i_6\}$ in
common with $y$, and therefore we may assume that $y\not\sim z$.
The group generated by $(i_1\;i_2)(i_3\;i_4)$,
$(i_1\:i_3)(i_2\;i_4)$ and $(i_1\;i_5)(i_2\;i_6)$ fixes both $y$
and $z$ but cannot fix $x$, a contradiction. Suppose now $k>2$.
Then the automorphism $(1\;2\;3)$ destroys at most $3$ classes of
$y$ and maps $y$ to a point $z$ sharing at least $\ell-3$ classes
with $y$. This is at least one if $\ell>3$. If $\ell=3$, then
$k>3$ and hence some class of $y$ shares at least two elements
with some class of $x$. Without loss of generality, we may assume
that $1,2$ are in some class of $y$ and hence the automorphism
$(1\;2\;3)$ destroys at most two classes of $y$, resulting in the
fact that $z$ shares at least one class with $y$ again. Let this
common class be given by $\{i_1,i_2,i_3,\ldots\}$, where we may
suppose without loss of generality that $i_1,i_2,i_3$ do not
belong to a common class of $x$. The automorphism
$(i_1\;i_2\;i_3)$ fixes both $y$ and $z$, but not $x=y\JJoin z$, a
contradiction. Our claim is proved.

Now let $\ell_1$ be maximal with respect to the property that there exist two collinear points sharing $\ell_1$
classes. By the foregoing, $\ell_1>0$, and we may assume that the class $\{1,2,\ldots,k\}$ belongs to the point
$y\sim x$. Suppose that $\ell_1<\ell-2$. In particular, it follows from our assumptions that $\ell\geq 4$. It
also follows from our assumptions that there is a transposition $(j_1\;j_2)$ fixing $x$ and not fixing $y$.
Hence the automorphism $(1\;2)(j_1\;j_2)$ preserves $x$ and maps $y$ to a point $z$ sharing $\ell-2$ classes
with $y$. By the maximality of $\ell_1$, we see that $y\not\sim z$. Also, $y$ and $z$ contain a common class
which is not a class of $x$. So there exist elements $j_3,j_4$ contained in a common class of both $y$ and $z$,
but belonging to different classes of $x$. The automorphism $(1\;2)(j_3\;j_4)$ fixes $y$ and $z$, but not
$x=y\JJoin z$, a contradiction. We have shown that $\ell_1=\ell-2$, and this now holds for all $\ell\geq 3$.

Now let $k_1$ be the maximal number of elements in the intersection of two distinct classes of two collinear
elements sharing $\ell-2$ classes. Note that $k_1\geq k/2>0$. First we show that $k_1<k-1$. So we assume by way
of contradiction that $k_1=k-1$. By transitivity of $G_x$, every point with a partition sharing $\ell-2$
classes with $x$ and for which the distinct classes share $k-1$ elements, is collinear with $x$. By
flag-transitivity and thickness, at least two such points $y',y''$ are not collinear with $y$. If the different
classes of $y'$ (compared with the classes of $x$) are the same as those of $y$, then, for $k>2$, the same
arguments as in the intransitive case lead to a contradiction. For $k=2$, $y''$ does not have this property
(since there are only three points with $\ell-2$ given partition classes), and we switch the roles of $y'$ and
$y''$ in this case. So $y'$ differs from $y$ in three or four classes. We distinguish between two
possibilities.

(1) \emph{$y$ and $y'$ differ in exactly three partition classes.} We may assume that $y$ contains the classes
$\{1,2,\ldots,k-1,k+1\}$ and $\{k,k+2,k+3,\ldots,2k\}$ (and the other classes coincide with classes of $x$).
Without loss of generality, there are two possibilities for $y'$. Either $y'$ contains the classes
$\{1,2,\ldots,k-1,2k+1\}$ and $\{k,2k+2,2k+3,\ldots,3k\}$ (and the other classes coincide with classes of $x$),
or $y'$ contains the classes $\{1,2,\ldots,k-2,k,2k+1\}$ and $\{k-1,2k+2,2k+3,\ldots,3k\}$ (and the other
classes coincide with classes of $x$). In the first case the automorphism $g=(k+1\; 2k+1\; k)$ maps $y$ onto
$y'$, and $y'$ onto a point collinear with $x$. Since $y'g$ is not collinear with $yg=y'$, we see that $g$ must
preserve $y\JJoin y'=yg\JJoin y'g=x$. But it clearly does not, a contradiction. In the second case the
automorphism $(k-1\; 2k+1) (k\; k+1)$ interchanges $y$ with $y'$, but does not fix $x=y\JJoin y'$, a
contradiction.

(2) Hence \emph{$y$ and $y'$ differ in four partition classes.} We take $y$ as in (1), and we can assume that
$y'$ contains the classes $\{2k+1,2k+2,\ldots,3k-1,3k+1\}$ and $\{3k,3k+2,3k+3,\ldots,4k\}$. Now the
automorphism $(k\; k+1)\; (3k\; 3k+1)$ interchanges $y$ with $y'$ without fixing $x=y\JJoin y'$, a
contradiction.

Hence we have shown $k_1<k-1$. But now the rest of the proof is similar to the last paragraph of the
intransitive case, where the subcase $k_1=0$ cannot occur. We conclude that $G_x$ is primitive on
$\{1,2,\ldots,n\}$.

{\em $G_x$ is primitive.}
By Lemma~\ref{lemma1}(v),
$|G|\leq |G_x|^{12}$, and so Lemma~\ref{smallprim} implies that $G$ and $G_x$
must be in the table of Appendix~\ref{app1}. Set $u=|\cP|=|G:G_x|$
and let $a(u)$ and $b(u)$ be the quantities defined before
Lemma~\ref{21}. Then Lemma~\ref{21} implies that if $\cS$ is a hexagon then
$a(u)^3\leq u$ and, if $\cS$ is an octagon, then $b(u)^2\leq u$.
For each
pair $(G,G_x)$ in Appendix~\ref{app1},
one can compute using, for instance, the {\sf GAP} computational algebra
system, the quantities $u$, $a(u)$, and $b(u)$. The computation shows
that
$a(u)^3> u$ and $b(u)^2> u$ holds in each of the cases.
The computation of $a(u)$ and $b(u)$ are presented in Appendices~\ref{app2}
and~\ref{app3}.
Therefore
none of the groups in Appendix~\ref{app1} can occur, and so we exclude this
case as well.

Thus $G$ cannot be an alternating or symmetric group.
\hfill$\Box$

Now we can prove our main theorem.

\noindent{\sc Proof of Theorem~\ref{simple}}. Suppose that $\cS$ and $G$ are as in the theorem. Then
Theorem~\ref{th2} implies that $G$ must be an almost simple group. Let $T$ denote the unique minimal normal
subgroup of $G$. Note that $T$ is a non-abelian simple group. By~\cite{Bue-Mal}, $T$ cannot be a sporadic
simple group, and by Theorem~\ref{mainas}, $T$ cannot be an alternating group. Thus $T$ must be a simple group
of Lie type and $G$ must be an almost simple group of Lie type. \hfill$\Box$

\section{Directions of future work}

Now that Theorem~\ref{simple} is proved, the next step in the full classification
of generalized hexagons and octagons satisfying the conditions of
Theorem~\ref{simple} is to treat the class of almost simple groups of Lie type. 
It is not our intention to be as detailed as possible 
regarding these groups, as we think the only worthwhile job now is to 
complete the classification in full.
We noted in the proof Lemma~\ref{prlemma} that in a generalized octagon either
the number of points or the number of lines is odd. Therefore it is meaningful
to investigate which almost simple groups of Lie type with odd
degree can occur in Theorem~\ref{simple}. Another possible task is to
use Lemma~\ref{lemma1} to characterize the case when the parameters are not
co-prime. 
We conclude this paper by presenting a couple of examples to 
illustrate that Lemmas~\ref{21} and~\ref{lemma1}  can be used, to
some extent, in this direction.
However, our examples also
show that a complete treatment of these groups is beyond the scope of this
paper and will probably require new ideas.

Let us assume that $G$ is an almost simple group of Lie type with socle $T$
and that $G$ is a group of automorphisms of a generalized hexagon or
octagon $\cS=(\cP,\cL)$ acting primitively both on the point set and on the line set,  
and transitively on the set of flags. 
Suppose, in addition, that the number $|\cP|$ of points is odd and let $x$ be a
point.  The possibilities
for $T$ and the point stabilizer $T_x$ can be found in~\cite{Kan:83,ls}. One
possibility, for instance, is that $q=3^{2m+1}$ with some $m\geq 1$, 
$T\cong {}^2{\sf G}_2(q)$ and
$|T_x|=q(q^2-1)$. We claim that it follows from our results that this case
cannot occur. Note that $|\cP|=q^2(q^2-q+1)$.
If $\cS$ is a hexagon, then Lemma~\ref{21} implies that 
$a(q^2(q^2-q+1))^3\leq q^2(q^2-q+1)$ (the function $a$ is defined before 
Lemma~\ref{21}). 
However, 
$a(q^2(q^2-q+1))^3\geq 3^{12m+3}$ which would imply that
$3^{12m+3}\leq 3^{8m+4}$ which does not hold for $m\geq 1$. Thus such a
hexagon does not exist, and similar argument shows that neither does such an
octagon. 

Another case is that $T\cong {\sf F}_4(q)$, 
$|T_x|=q^{16}(q^2-1)(q^4-1)(q^6-1)(q^8-1)$, and so $|\cP|=q^8(q^8+q^4+1)$. 
 Computer
calculation shows that among the prime-powers that are smaller then $10^4$,
there are 626 values of $q$ such that $a(|\cP|)^3\leq |\cP|$, and there are
625 such values with $b(|\cP|)^2\leq |\cP|$. Therefore Lemma~\ref{21} cannot
directly be used to exclude this case.

We conclude this paper with an  example that shows how 
Lemma~\ref{lemma1} may be applied. Let $\cS=(\cP,\cL)$ be as above and let us
assume that the parameters $s$ and $t$ of $\cS$ are not co-prime. By
Lemma~\ref{lemma1}(ii), an involution in $G$ 
either fixes a point or fixes a line.  Now if $G$ is isomorphic to $\alt n$ or
$\sy n$ with some $n\geq 5$, then, by possibly taking the
dual polygon, we may assume that a double transposition (in the natural representation of
$G$) is contained in a point stabilizer $G_x$. Therefore, as a subgroup of
$\sy n$, $G_x$ has minimal
degree at most 4 (see~\cite[page 76]{dm} for the definition of the minimal degree). Now if $G_x$ is primitive then~\cite[Example~3.3.1]{dm} shows that $n\leq
8$, and hence $G$ is ruled out  by~\cite{Bue-Mal}. This argument shows that under the
additional condition that $\gcd(s,t)\neq 1$, the proof of Theorem~\ref{mainas}
can be significantly simplified.

\section{Acknowledgments}

The first author was supported by the Hungarian Scientific Research Fund (OTKA) grant F049040; he is grateful
to the Ghent University for its hospitality while working on this paper. The second author is partly
supported by a research grant of the Fund for Scientific Research (FWO -- Vlaanderen); he is grateful to the
Computer and Automation Research Institute in Budapest for the hospitality while working on this paper.

We would like to thank the participants of the discussion following the
second author's lecture in August 2007 at the ``Permutation Groups''
workshop in Oberwolfach for their useful observations. In particular we
thank Pham Huu Tiep for suggesting a shortcut in the proof of Lemma~
\ref{prlemma} and Bill Kantor for his comments and his careful reading
of the paper.

\begin{appendices}
\section{Table 1}\label{app1}

The following table contains the primitive maximal subgroups $H$ 
of $\alt n$ and $\sy n$ ($n\geq 5$) such that $|H|^{12}\leq n!$ and 
$|H|^{12}\leq n!/2$ if $H\leq \alt n$. Note that the table may contain
non-maximal subgroups; see the remarks at the end of the proof of
Lemma~\ref{smallprim}. The table was automatically generated from a {\sf GAP}
output, and so the notation follows the {\sf GAP} system.

\bigskip

{\tiny
$$
\begin{array}{|c|c|c|c|}
\hline\verb?C(5)?
\leq{\sf A}_{5}
&
\verb?D(2*5)?
\leq{\sf A}_{5}
&
\verb?AGL(1, 5)?
\leq{\sf S}_{5}
&
\verb?PSL(2,5)?
\leq{\sf A}_{6}
\\
\verb?PGL(2,5)?
\leq{\sf S}_{6}
&
\verb?7:3?
\leq{\sf A}_{7}
&
\verb?L(3, 2)?
\leq{\sf A}_{7}
&
\verb?AGL(1, 7)?
\leq{\sf S}_{7}
\\
\verb?ASL(3, 2)?
\leq{\sf A}_{8}
&
\verb?PSL(2, 7)?
\leq{\sf A}_{8}
&
\verb?PGL(2, 7)?
\leq{\sf S}_{8}
&
\verb?3^2:(2'A(4))?
\leq{\sf A}_{9}
\\
\verb?PGammaL(2, 8)?
\leq{\sf A}_{9}
&
\verb?AGL(2, 3)?
\leq{\sf S}_{9}
&
\verb?A(5)?
\leq{\sf A}_{10}
&
\verb?M(10)?
\leq{\sf A}_{10}
\\
\verb?S(5)?
\leq{\sf S}_{10}
&
\verb?PGammaL(2, 9)?
\leq{\sf S}_{10}
&
\verb?11:5?
\leq{\sf A}_{11}
&
\verb?M(11)?
\leq{\sf A}_{11}
\\
\verb?AGL(1, 11)?
\leq{\sf S}_{11}
&
\verb?M(11)?
\leq{\sf A}_{12}
&
\verb?M(12)?
\leq{\sf A}_{12}
&
\verb?PSL(2, 11)?
\leq{\sf A}_{12}
\\
\verb?PGL(2, 11)?
\leq{\sf S}_{12}
&
\verb?13:6?
\leq{\sf A}_{13}
&
\verb?L(3, 3)?
\leq{\sf A}_{13}
&
\verb?AGL(1, 13)?
\leq{\sf S}_{13}
\\
\verb?PSL(2,13)?
\leq{\sf A}_{14}
&
\verb?PGL(2,13)?
\leq{\sf S}_{14}
&
\verb?PSL(4, 2)?
\leq{\sf A}_{15}
&
\verb?2^4.PSL(4, 2)?
\leq{\sf A}_{16}
\\
\verb?17:8?
\leq{\sf A}_{17}
&
\verb?L(2, 2^4):4 = PGammaL(2, 2^4)?
\leq{\sf A}_{17}
&
\verb?AGL(1, 17)?
\leq{\sf S}_{17}
&
\verb?PSL(2,17)?
\leq{\sf A}_{18}
\\
\verb?PGL(2,17)?
\leq{\sf S}_{18}
&
\verb?19:9?
\leq{\sf A}_{19}
&
\verb?AGL(1, 19)?
\leq{\sf S}_{19}
&
\verb?PSL(2,19)?
\leq{\sf A}_{20}
\\
\verb?PGL(2,19)?
\leq{\sf S}_{20}
&
\verb?A(7)?
\leq{\sf A}_{21}
&
\verb?PGL(3, 4)?
\leq{\sf A}_{21}
&
\verb?PGL(2, 7)?
\leq{\sf S}_{21}
\\
\verb?S(7)?
\leq{\sf S}_{21}
&
\verb?PGammaL(3, 4)?
\leq{\sf S}_{21}
&
\verb?M(22)?
\leq{\sf A}_{22}
&
\verb?M(22):2?
\leq{\sf S}_{22}
\\
\verb?23:11?
\leq{\sf A}_{23}
&
\verb?M(23)?
\leq{\sf A}_{23}
&
\verb?AGL(1, 23)?
\leq{\sf S}_{23}
&
\verb?M(24)?
\leq{\sf A}_{24}
\\
\verb?PSL(2, 23)?
\leq{\sf A}_{24}
&
\verb?PGL(2, 23)?
\leq{\sf S}_{24}
&
\verb?ASL(2, 5):2?
\leq{\sf A}_{25}
&
\verb?(A(5) x A(5)):2^2?
\leq{\sf A}_{25}
\\
\verb?AGL(2, 5)?
\leq{\sf S}_{25}
&
\verb?(S(5) x S(5)):2?
\leq{\sf S}_{25}
&
\verb?PSigmaL(2, 25)?
\leq{\sf A}_{26}
&
\verb?PGammaL(2, 25)?
\leq{\sf S}_{26}
\\
\verb?ASL(3, 3)?
\leq{\sf A}_{27}
&
\verb?PSp(4, 3):2?
\leq{\sf A}_{27}
&
\verb?AGL(3, 3)?
\leq{\sf S}_{27}
&
\verb?PGammaL(2, 8)?
\leq{\sf A}_{28}
\\
\verb?PGammaU(3, 3)?
\leq{\sf A}_{28}
&
\verb?PSp(6, 2)?
\leq{\sf A}_{28}
&
\verb?S(8)?
\leq{\sf A}_{28}
&
\verb?PSL(2, 27):3?
\leq{\sf A}_{28}
\\
\verb?PGammaL(2, 27)?
\leq{\sf S}_{28}
&
\verb?29:14?
\leq{\sf A}_{29}
&
\verb?AGL(1, 29)?
\leq{\sf S}_{29}
&
\verb?PSL(2,29)?
\leq{\sf A}_{30}
\\
\verb?PGL(2,29)?
\leq{\sf S}_{30}
&
\verb?L(3, 5)?
\leq{\sf A}_{31}
&
\verb?L(5, 2)?
\leq{\sf A}_{31}
&
\verb?AGL(1, 31)?
\leq{\sf S}_{31}
\\
\verb?ASL(5, 2)?
\leq{\sf A}_{32}
&
\verb?PSL(2, 31)?
\leq{\sf A}_{32}
&
\verb?PGL(2, 31)?
\leq{\sf S}_{32}
&
\verb?PGammaL(2, 32)?
\leq{\sf A}_{33}
\\
\verb?S(8)?
\leq{\sf A}_{35}
&
\verb?PGammaU(3, 3)?
\leq{\sf A}_{36}
&
\verb?PSp(4, 3):2?
\leq{\sf A}_{36}
&
\verb?PSp(6, 2)?
\leq{\sf A}_{36}
\\
\verb?A(9)?
\leq{\sf A}_{36}
&
\verb?S(9)?
\leq{\sf S}_{36}
&
\verb?(S(6) x S(6)):2?
\leq{\sf S}_{36}
&
\verb?(S(5) x S(5)):2?
\leq{\sf S}_{36}
\\
\verb?PSL(2,37)?
\leq{\sf A}_{38}
&
\verb?PGL(2,37)?
\leq{\sf S}_{38}
&
\verb?PSp(4, 3):2?
\leq{\sf A}_{40}
&
\verb?PSp(4, 3)?
\leq{\sf A}_{40}
\\
\verb?PSL(4, 3)?
\leq{\sf A}_{40}
&
\verb?PSp(4, 3):2?
\leq{\sf S}_{40}
&
\verb?PGL(4, 3)?
\leq{\sf S}_{40}
&
\verb?PSL(2,41)?
\leq{\sf A}_{42}
\\
\verb?PGL(2,41)?
\leq{\sf S}_{42}
&
\verb?PSL(2,43)?
\leq{\sf A}_{44}
&
\verb?PGL(2,43)?
\leq{\sf S}_{44}
&
\verb?S(10)?
\leq{\sf A}_{45}
\\
\verb?PSp(4, 3):2?
\leq{\sf S}_{45}
&
\verb?(A(7) x A(7)):4?
\leq{\sf A}_{49}
&
\verb?(S(7) x S(7)):2?
\leq{\sf S}_{49}
&
\verb?PSU(3, 5):2?
\leq{\sf A}_{50}
\\
\verb?Alt(11)?
\leq{\sf A}_{55}
&
\verb?Sym(11)?
\leq{\sf S}_{55}
&
\verb?PSL(3, 7).3?
\leq{\sf A}_{57}
&
\verb?PSL(6, 2)?
\leq{\sf A}_{63}
\\
\verb?AGL(6, 2)?
\leq{\sf A}_{64}
&
\verb?Sym(8) wreath Sym(2)?
\leq{\sf A}_{64}
&
\verb?Sym(12)?
\leq{\sf A}_{66}
&
\verb?Sym(13)?
\leq{\sf S}_{78}
\\
\verb?Alt(9)^2.2^2?
\leq{\sf A}_{81}
&
\verb?Sym(9) wreath Sym(2)?
\leq{\sf S}_{81}
&
\verb?Sym(10) wreath Sym(2)?
\leq{\sf S}_{100}
\\
\hline
\end{array}
$$}

\newpage

\section{Table 2}\label{app2}

The following table contains the values of $u$ and $a(u)$ 
where $u=|G:H|$, $G$ and $H$ are as in Appendix~\ref{app1}, and $\deg G=\deg
H\geq 14$. One can read off that in each of the cases $a(u)^3>u$, 
which implies that $G$ cannot be an automorphism group of a flag-transitive
generalized hexagon with point stabilizer $H$. See the proof of Theorem~\ref{mainas}.

\bigskip

{\tiny$$\begin{array}{cccc}\begin{array}{c}
\verb?PSL(2,13)?
\leq{\sf A}_{14}\\
a(u)^3\approx6.86\cdot 10^{18}\\
u\approx3.99\cdot 10^{7}\end{array}
&
\begin{array}{c}
\verb?PGL(2,13)?
\leq{\sf S}_{14}\\
a(u)^3\approx6.86\cdot 10^{18}\\
u\approx3.99\cdot 10^{7}\end{array}
&
\begin{array}{c}
\verb?PSL(4, 2)?
\leq{\sf A}_{15}\\
a(u)^3\approx1.67\cdot 10^{15}\\
u\approx3.24\cdot 10^{7}\end{array}
&
\begin{array}{c}
\verb?2^4.PSL(4, 2)?
\leq{\sf A}_{16}\\
a(u)^3\approx1.67\cdot 10^{15}\\
u\approx3.24\cdot 10^{7}\end{array}
\\\\\hline\\
\begin{array}{c}
\verb?17:8?
\leq{\sf A}_{17}\\
a(u)^3\approx3.20\cdot 10^{26}\\
u\approx1.30\cdot 10^{12}\end{array}
&
\begin{array}{c}
\verb?L(2, 2^4):4 = PGammaL(2, 2^4)?
\leq{\sf A}_{17}\\
a(u)^3\approx1.85\cdot 10^{20}\\
u\approx1.08\cdot 10^{10}\end{array}
&
\begin{array}{c}
\verb?AGL(1, 17)?
\leq{\sf S}_{17}\\
a(u)^3\approx3.20\cdot 10^{26}\\
u\approx1.30\cdot 10^{12}\end{array}
&
\begin{array}{c}
\verb?PSL(2,17)?
\leq{\sf A}_{18}\\
a(u)^3\approx3.20\cdot 10^{26}\\
u\approx1.30\cdot 10^{12}\end{array}
\\\\\hline\\
\begin{array}{c}
\verb?PGL(2,17)?
\leq{\sf S}_{18}\\
a(u)^3\approx3.20\cdot 10^{26}\\
u\approx1.30\cdot 10^{12}\end{array}
&
\begin{array}{c}
\verb?19:9?
\leq{\sf A}_{19}\\
a(u)^3\approx6.44\cdot 10^{33}\\
u\approx3.55\cdot 10^{14}\end{array}
&
\begin{array}{c}
\verb?AGL(1, 19)?
\leq{\sf S}_{19}\\
a(u)^3\approx6.44\cdot 10^{33}\\
u\approx3.55\cdot 10^{14}\end{array}
&
\begin{array}{c}
\verb?PSL(2,19)?
\leq{\sf A}_{20}\\
a(u)^3\approx6.44\cdot 10^{33}\\
u\approx3.55\cdot 10^{14}\end{array}
\\\\\hline\\
\begin{array}{c}
\verb?PGL(2,19)?
\leq{\sf S}_{20}\\
a(u)^3\approx6.44\cdot 10^{33}\\
u\approx3.55\cdot 10^{14}\end{array}
&
\begin{array}{c}
\verb?A(7)?
\leq{\sf A}_{21}\\
a(u)^3\approx2.17\cdot 10^{34}\\
u\approx1.01\cdot 10^{16}\end{array}
&
\begin{array}{c}
\verb?PGL(3, 4)?
\leq{\sf A}_{21}\\
a(u)^3\approx1.57\cdot 10^{30}\\
u\approx4.22\cdot 10^{14}\end{array}
&
\begin{array}{c}
\verb?PGL(2, 7)?
\leq{\sf S}_{21}\\
a(u)^3\approx7.34\cdot 10^{37}\\
u\approx1.52\cdot 10^{17}\end{array}
\\\\\hline\\
\begin{array}{c}
\verb?S(7)?
\leq{\sf S}_{21}\\
a(u)^3\approx2.17\cdot 10^{34}\\
u\approx1.01\cdot 10^{16}\end{array}
&
\begin{array}{c}
\verb?PGammaL(3, 4)?
\leq{\sf S}_{21}\\
a(u)^3\approx1.57\cdot 10^{30}\\
u\approx4.22\cdot 10^{14}\end{array}
&
\begin{array}{c}
\verb?M(22)?
\leq{\sf A}_{22}\\
a(u)^3\approx4.25\cdot 10^{31}\\
u\approx1.26\cdot 10^{15}\end{array}
&
\begin{array}{c}
\verb?M(22):2?
\leq{\sf S}_{22}\\
a(u)^3\approx4.25\cdot 10^{31}\\
u\approx1.26\cdot 10^{15}\end{array}
\\\\\hline\\
\begin{array}{c}
\verb?23:11?
\leq{\sf A}_{23}\\
a(u)^3\approx8.12\cdot 10^{42}\\
u\approx5.10\cdot 10^{19}\end{array}
&
\begin{array}{c}
\verb?M(23)?
\leq{\sf A}_{23}\\
a(u)^3\approx4.25\cdot 10^{31}\\
u\approx1.26\cdot 10^{15}\end{array}
&
\begin{array}{c}
\verb?AGL(1, 23)?
\leq{\sf S}_{23}\\
a(u)^3\approx8.12\cdot 10^{42}\\
u\approx5.10\cdot 10^{19}\end{array}
&
\begin{array}{c}
\verb?M(24)?
\leq{\sf A}_{24}\\
a(u)^3\approx4.25\cdot 10^{31}\\
u\approx1.26\cdot 10^{15}\end{array}
\\\\\hline\\
\begin{array}{c}
\verb?PSL(2, 23)?
\leq{\sf A}_{24}\\
a(u)^3\approx8.12\cdot 10^{42}\\
u\approx5.10\cdot 10^{19}\end{array}
&
\begin{array}{c}
\verb?PGL(2, 23)?
\leq{\sf S}_{24}\\
a(u)^3\approx8.12\cdot 10^{42}\\
u\approx5.10\cdot 10^{19}\end{array}
&
\begin{array}{c}
\verb?ASL(2, 5):2?
\leq{\sf A}_{25}\\
a(u)^3\approx1.31\cdot 10^{47}\\
u\approx1.29\cdot 10^{21}\end{array}
&
\begin{array}{c}
\verb?(A(5) x A(5)):2^2?
\leq{\sf A}_{25}\\
a(u)^3\approx9.51\cdot 10^{45}\\
u\approx5.38\cdot 10^{20}\end{array}
\\\\\hline\\
\begin{array}{c}
\verb?AGL(2, 5)?
\leq{\sf S}_{25}\\
a(u)^3\approx1.31\cdot 10^{47}\\
u\approx1.29\cdot 10^{21}\end{array}
&
\begin{array}{c}
\verb?(S(5) x S(5)):2?
\leq{\sf S}_{25}\\
a(u)^3\approx9.51\cdot 10^{45}\\
u\approx5.38\cdot 10^{20}\end{array}
&
\begin{array}{c}
\verb?PSigmaL(2, 25)?
\leq{\sf A}_{26}\\
a(u)^3\approx1.31\cdot 10^{50}\\
u\approx1.29\cdot 10^{22}\end{array}
&
\begin{array}{c}
\verb?PGammaL(2, 25)?
\leq{\sf S}_{26}\\
a(u)^3\approx1.31\cdot 10^{50}\\
u\approx1.29\cdot 10^{22}\end{array}
\\\\\hline\\
\begin{array}{c}
\verb?ASL(3, 3)?
\leq{\sf A}_{27}\\
a(u)^3\approx2.81\cdot 10^{51}\\
u\approx3.59\cdot 10^{22}\end{array}
&
\begin{array}{c}
\verb?PSp(4, 3):2?
\leq{\sf A}_{27}\\
a(u)^3\approx3.21\cdot 10^{49}\\
u\approx1.05\cdot 10^{23}\end{array}
&
\begin{array}{c}
\verb?AGL(3, 3)?
\leq{\sf S}_{27}\\
a(u)^3\approx2.81\cdot 10^{51}\\
u\approx3.59\cdot 10^{22}\end{array}
&
\begin{array}{c}
\verb?PGammaL(2, 8)?
\leq{\sf A}_{28}\\
a(u)^3\approx2.84\cdot 10^{58}\\
u\approx1.00\cdot 10^{26}\end{array}
\\\\\hline\\
\begin{array}{c}
\verb?PGammaU(3, 3)?
\leq{\sf A}_{28}\\
a(u)^3\approx5.54\cdot 10^{55}\\
u\approx1.26\cdot 10^{25}\end{array}
&
\begin{array}{c}
\verb?PSp(6, 2)?
\leq{\sf A}_{28}\\
a(u)^3\approx3.21\cdot 10^{49}\\
u\approx1.05\cdot 10^{23}\end{array}
&
\begin{array}{c}
\verb?S(8)?
\leq{\sf A}_{28}\\
a(u)^3\approx1.49\cdot 10^{54}\\
u\approx3.78\cdot 10^{24}\end{array}
&
\begin{array}{c}
\verb?PSL(2, 27):3?
\leq{\sf A}_{28}\\
a(u)^3\approx8.41\cdot 10^{57}\\
u\approx5.17\cdot 10^{24}\end{array}
\end{array}$$}{\tiny$$\begin{array}{cccc}\begin{array}{c}
\verb?PGammaL(2, 27)?
\leq{\sf S}_{28}\\
a(u)^3\approx8.41\cdot 10^{57}\\
u\approx5.17\cdot 10^{24}\end{array}
&
\begin{array}{c}
\verb?29:14?
\leq{\sf A}_{29}\\
a(u)^3\approx3.57\cdot 10^{64}\\
u\approx1.08\cdot 10^{28}\end{array}
&
\begin{array}{c}
\verb?AGL(1, 29)?
\leq{\sf S}_{29}\\
a(u)^3\approx3.57\cdot 10^{64}\\
u\approx1.08\cdot 10^{28}\end{array}
&
\begin{array}{c}
\verb?PSL(2,29)?
\leq{\sf A}_{30}\\
a(u)^3\approx3.57\cdot 10^{64}\\
u\approx1.08\cdot 10^{28}\end{array}
\\\\\hline\\
\begin{array}{c}
\verb?PGL(2,29)?
\leq{\sf S}_{30}\\
a(u)^3\approx3.57\cdot 10^{64}\\
u\approx1.08\cdot 10^{28}\end{array}
&
\begin{array}{c}
\verb?L(3, 5)?
\leq{\sf A}_{31}\\
a(u)^3\approx1.09\cdot 10^{62}\\
u\approx1.10\cdot 10^{28}\end{array}
&
\begin{array}{c}
\verb?L(5, 2)?
\leq{\sf A}_{31}\\
a(u)^3\approx1.92\cdot 10^{60}\\
u\approx4.11\cdot 10^{26}\end{array}
&
\begin{array}{c}
\verb?AGL(1, 31)?
\leq{\sf S}_{31}\\
a(u)^3\approx5.58\cdot 10^{70}\\
u\approx8.84\cdot 10^{30}\end{array}
\\\\\hline\\
\begin{array}{c}
\verb?ASL(5, 2)?
\leq{\sf A}_{32}\\
a(u)^3\approx1.92\cdot 10^{60}\\
u\approx4.11\cdot 10^{26}\end{array}
&
\begin{array}{c}
\verb?PSL(2, 31)?
\leq{\sf A}_{32}\\
a(u)^3\approx5.58\cdot 10^{70}\\
u\approx8.84\cdot 10^{30}\end{array}
&
\begin{array}{c}
\verb?PGL(2, 31)?
\leq{\sf S}_{32}\\
a(u)^3\approx5.58\cdot 10^{70}\\
u\approx8.84\cdot 10^{30}\end{array}
&
\begin{array}{c}
\verb?PGammaL(2, 32)?
\leq{\sf A}_{33}\\
a(u)^3\approx1.50\cdot 10^{72}\\
u\approx2.65\cdot 10^{31}\end{array}
\\\\\hline\\
\begin{array}{c}
\verb?S(8)?
\leq{\sf A}_{35}\\
a(u)^3\approx5.70\cdot 10^{78}\\
u\approx1.28\cdot 10^{35}\end{array}
&
\begin{array}{c}
\verb?PGammaU(3, 3)?
\leq{\sf A}_{36}\\
a(u)^3\approx9.86\cdot 10^{84}\\
u\approx1.53\cdot 10^{37}\end{array}
&
\begin{array}{c}
\verb?PSp(4, 3):2?
\leq{\sf A}_{36}\\
a(u)^3\approx3.65\cdot 10^{80}\\
u\approx3.58\cdot 10^{36}\end{array}
&
\begin{array}{c}
\verb?PSp(6, 2)?
\leq{\sf A}_{36}\\
a(u)^3\approx5.70\cdot 10^{78}\\
u\approx1.28\cdot 10^{35}\end{array}
\\\\\hline\\
\begin{array}{c}
\verb?A(9)?
\leq{\sf A}_{36}\\
a(u)^3\approx2.92\cdot 10^{81}\\
u\approx1.02\cdot 10^{36}\end{array}
&
\begin{array}{c}
\verb?S(9)?
\leq{\sf S}_{36}\\
a(u)^3\approx2.92\cdot 10^{81}\\
u\approx1.02\cdot 10^{36}\end{array}
&
\begin{array}{c}
\verb?(S(6) x S(6)):2?
\leq{\sf S}_{36}\\
a(u)^3\approx3.65\cdot 10^{77}\\
u\approx3.58\cdot 10^{35}\end{array}
&
\begin{array}{c}
\verb?(S(5) x S(5)):2?
\leq{\sf S}_{36}\\
a(u)^3\approx1.70\cdot 10^{82}\\
u\approx1.29\cdot 10^{37}\end{array}
\\\\\hline\\
\begin{array}{c}
\verb?PSL(2,37)?
\leq{\sf A}_{38}\\
a(u)^3\approx8.72\cdot 10^{90}\\
u\approx1.03\cdot 10^{40}\end{array}
&
\begin{array}{c}
\verb?PGL(2,37)?
\leq{\sf S}_{38}\\
a(u)^3\approx8.72\cdot 10^{90}\\
u\approx1.03\cdot 10^{40}\end{array}
&
\begin{array}{c}
\verb?PSp(4, 3):2?
\leq{\sf A}_{40}\\
a(u)^3\approx5.05\cdot 10^{87}\\
u\approx7.86\cdot 10^{42}\end{array}
&
\begin{array}{c}
\verb?PSp(4, 3)?
\leq{\sf A}_{40}\\
a(u)^3\approx4.04\cdot 10^{88}\\
u\approx1.57\cdot 10^{43}\end{array}
\\\\\hline\\
\begin{array}{c}
\verb?PSL(4, 3)?
\leq{\sf A}_{40}\\
a(u)^3\approx6.92\cdot 10^{84}\\
u\approx6.72\cdot 10^{40}\end{array}
&
\begin{array}{c}
\verb?PSp(4, 3):2?
\leq{\sf S}_{40}\\
a(u)^3\approx4.04\cdot 10^{88}\\
u\approx1.57\cdot 10^{43}\end{array}
&
\begin{array}{c}
\verb?PGL(4, 3)?
\leq{\sf S}_{40}\\
a(u)^3\approx6.92\cdot 10^{84}\\
u\approx6.72\cdot 10^{40}\end{array}
&
\begin{array}{c}
\verb?PSL(2,41)?
\leq{\sf A}_{42}\\
a(u)^3\approx8.79\cdot 10^{97}\\
u\approx2.03\cdot 10^{46}\end{array}
\\\\\hline\\
\begin{array}{c}
\verb?PGL(2,41)?
\leq{\sf S}_{42}\\
a(u)^3\approx8.79\cdot 10^{97}\\
u\approx2.03\cdot 10^{46}\end{array}
&
\begin{array}{c}
\verb?PSL(2,43)?
\leq{\sf A}_{44}\\
a(u)^3\approx3.87\cdot 10^{107}\\
u\approx3.34\cdot 10^{49}\end{array}
&
\begin{array}{c}
\verb?PGL(2,43)?
\leq{\sf S}_{44}\\
a(u)^3\approx3.87\cdot 10^{107}\\
u\approx3.34\cdot 10^{49}\end{array}
&
\begin{array}{c}
\verb?S(10)?
\leq{\sf A}_{45}\\
a(u)^3\approx5.83\cdot 10^{101}\\
u\approx1.64\cdot 10^{49}\end{array}
\\\\\hline\\
\begin{array}{c}
\verb?PSp(4, 3):2?
\leq{\sf S}_{45}\\
a(u)^3\approx4.66\cdot 10^{105}\\
u\approx2.30\cdot 10^{51}\end{array}
&
\begin{array}{c}
\verb?(A(7) x A(7)):4?
\leq{\sf A}_{49}\\
a(u)^3\approx6.52\cdot 10^{116}\\
u\approx1.19\cdot 10^{55}\end{array}
&
\begin{array}{c}
\verb?(S(7) x S(7)):2?
\leq{\sf S}_{49}\\
a(u)^3\approx6.52\cdot 10^{116}\\
u\approx1.19\cdot 10^{55}\end{array}
&
\begin{array}{c}
\verb?PSU(3, 5):2?
\leq{\sf A}_{50}\\
a(u)^3\approx2.43\cdot 10^{125}\\
u\approx6.03\cdot 10^{58}\end{array}
\\\\\hline\\
\begin{array}{c}
\verb?Alt(11)?
\leq{\sf A}_{55}\\
a(u)^3\approx1.62\cdot 10^{142}\\
u\approx3.18\cdot 10^{65}\end{array}
&
\begin{array}{c}
\verb?Sym(11)?
\leq{\sf S}_{55}\\
a(u)^3\approx1.62\cdot 10^{142}\\
u\approx3.18\cdot 10^{65}\end{array}
&
\begin{array}{c}
\verb?PSL(3, 7).3?
\leq{\sf A}_{57}\\
a(u)^3\approx8.06\cdot 10^{156}\\
u\approx3.59\cdot 10^{69}\end{array}
&
\begin{array}{c}
\verb?PSL(6, 2)?
\leq{\sf A}_{63}\\
a(u)^3\approx1.12\cdot 10^{164}\\
u\approx4.91\cdot 10^{76}\end{array}
\end{array}$$}{\tiny$$\begin{array}{cccc}\begin{array}{c}
\verb?AGL(6, 2)?
\leq{\sf A}_{64}\\
a(u)^3\approx1.12\cdot 10^{164}\\
u\approx4.91\cdot 10^{76}\end{array}
&
\begin{array}{c}
\verb?Sym(8) wreath Sym(2)?
\leq{\sf A}_{64}\\
a(u)^3\approx2.35\cdot 10^{167}\\
u\approx1.95\cdot 10^{79}\end{array}
&
\begin{array}{c}
\verb?Sym(12)?
\leq{\sf A}_{66}\\
a(u)^3\approx7.71\cdot 10^{174}\\
u\approx5.68\cdot 10^{83}\end{array}
&
\begin{array}{c}
\verb?Sym(13)?
\leq{\sf S}_{78}\\
a(u)^3\approx5.28\cdot 10^{214}\\
u\approx1.81\cdot 10^{105}\end{array}
\\\\\hline\\
\begin{array}{c}
\verb?Alt(9)^2.2^2?
\leq{\sf A}_{81}\\
a(u)^3\approx2.97\cdot 10^{220}\\
u\approx2.20\cdot 10^{109}\end{array}
&
\begin{array}{c}
\verb?Sym(9) wreath Sym(2)?
\leq{\sf S}_{81}\\
a(u)^3\approx2.97\cdot 10^{220}\\
u\approx2.20\cdot 10^{109}\end{array}
&
\begin{array}{c}
\verb?Sym(10) wreath Sym(2)?
\leq{\sf S}_{100}\\
a(u)^3\approx2.74\cdot 10^{293}\\
u\approx3.54\cdot 10^{144}\end{array}
&
\end{array}$$}

\newpage

\section{Table 3}\label{app3}

The following table contains the values of $u$ and $b(u)$,
where $u=|G:H|$, $G$ and $H$ are as in Appendix~\ref{app1}, and $\deg G=\deg
H\geq 14$. One can read off that in each of the cases $b(u)^2>u$, 
which implies that $G$ cannot be a automorphism group of a flag-transitive
generalized 
octagon with point stabilizer $H$. See the proof of Theorem~\ref{mainas}.

\bigskip

{\tiny$$\begin{array}{cccc}\begin{array}{c}
\verb?PSL(2,13)?
\leq{\sf A}_{14}\\
b(u)^2\approx3.61\cdot 10^{12}\\
u\approx3.99\cdot 10^{7}\end{array}
&
\begin{array}{c}
\verb?PGL(2,13)?
\leq{\sf S}_{14}\\
b(u)^2\approx3.61\cdot 10^{12}\\
u\approx3.99\cdot 10^{7}\end{array}
&
\begin{array}{c}
\verb?PSL(4, 2)?
\leq{\sf A}_{15}\\
b(u)^2\approx1.41\cdot 10^{10}\\
u\approx3.24\cdot 10^{7}\end{array}
&
\begin{array}{c}
\verb?2^4.PSL(4, 2)?
\leq{\sf A}_{16}\\
b(u)^2\approx1.41\cdot 10^{10}\\
u\approx3.24\cdot 10^{7}\end{array}
\\\\\hline\\
\begin{array}{c}
\verb?17:8?
\leq{\sf A}_{17}\\
b(u)^2\approx4.68\cdot 10^{17}\\
u\approx1.30\cdot 10^{12}\end{array}
&
\begin{array}{c}
\verb?L(2, 2^4):4 = PGammaL(2, 2^4)?
\leq{\sf A}_{17}\\
b(u)^2\approx3.25\cdot 10^{13}\\
u\approx1.08\cdot 10^{10}\end{array}
&
\begin{array}{c}
\verb?AGL(1, 17)?
\leq{\sf S}_{17}\\
b(u)^2\approx4.68\cdot 10^{17}\\
u\approx1.30\cdot 10^{12}\end{array}
&
\begin{array}{c}
\verb?PSL(2,17)?
\leq{\sf A}_{18}\\
b(u)^2\approx4.68\cdot 10^{17}\\
u\approx1.30\cdot 10^{12}\end{array}
\\\\\hline\\
\begin{array}{c}
\verb?PGL(2,17)?
\leq{\sf S}_{18}\\
b(u)^2\approx4.68\cdot 10^{17}\\
u\approx1.30\cdot 10^{12}\end{array}
&
\begin{array}{c}
\verb?19:9?
\leq{\sf A}_{19}\\
b(u)^2\approx3.46\cdot 10^{22}\\
u\approx3.55\cdot 10^{14}\end{array}
&
\begin{array}{c}
\verb?AGL(1, 19)?
\leq{\sf S}_{19}\\
b(u)^2\approx3.46\cdot 10^{22}\\
u\approx3.55\cdot 10^{14}\end{array}
&
\begin{array}{c}
\verb?PSL(2,19)?
\leq{\sf A}_{20}\\
b(u)^2\approx3.46\cdot 10^{22}\\
u\approx3.55\cdot 10^{14}\end{array}
\\\\\hline\\
\begin{array}{c}
\verb?PGL(2,19)?
\leq{\sf S}_{20}\\
b(u)^2\approx3.46\cdot 10^{22}\\
u\approx3.55\cdot 10^{14}\end{array}
&
\begin{array}{c}
\verb?A(7)?
\leq{\sf A}_{21}\\
b(u)^2\approx7.79\cdot 10^{22}\\
u\approx1.01\cdot 10^{16}\end{array}
&
\begin{array}{c}
\verb?PGL(3, 4)?
\leq{\sf A}_{21}\\
b(u)^2\approx1.35\cdot 10^{20}\\
u\approx4.22\cdot 10^{14}\end{array}
&
\begin{array}{c}
\verb?PGL(2, 7)?
\leq{\sf S}_{21}\\
b(u)^2\approx1.75\cdot 10^{25}\\
u\approx1.52\cdot 10^{17}\end{array}
\\\\\hline\\
\begin{array}{c}
\verb?S(7)?
\leq{\sf S}_{21}\\
b(u)^2\approx7.79\cdot 10^{22}\\
u\approx1.01\cdot 10^{16}\end{array}
&
\begin{array}{c}
\verb?PGammaL(3, 4)?
\leq{\sf S}_{21}\\
b(u)^2\approx1.35\cdot 10^{20}\\
u\approx4.22\cdot 10^{14}\end{array}
&
\begin{array}{c}
\verb?M(22)?
\leq{\sf A}_{22}\\
b(u)^2\approx1.21\cdot 10^{21}\\
u\approx1.26\cdot 10^{15}\end{array}
&
\begin{array}{c}
\verb?M(22):2?
\leq{\sf S}_{22}\\
b(u)^2\approx1.21\cdot 10^{21}\\
u\approx1.26\cdot 10^{15}\end{array}
\\\\\hline\\
\begin{array}{c}
\verb?23:11?
\leq{\sf A}_{23}\\
b(u)^2\approx4.04\cdot 10^{28}\\
u\approx5.10\cdot 10^{19}\end{array}
&
\begin{array}{c}
\verb?M(23)?
\leq{\sf A}_{23}\\
b(u)^2\approx1.21\cdot 10^{21}\\
u\approx1.26\cdot 10^{15}\end{array}
&
\begin{array}{c}
\verb?AGL(1, 23)?
\leq{\sf S}_{23}\\
b(u)^2\approx4.04\cdot 10^{28}\\
u\approx5.10\cdot 10^{19}\end{array}
&
\begin{array}{c}
\verb?M(24)?
\leq{\sf A}_{24}\\
b(u)^2\approx1.21\cdot 10^{21}\\
u\approx1.26\cdot 10^{15}\end{array}
\\\\\hline\\
\begin{array}{c}
\verb?PSL(2, 23)?
\leq{\sf A}_{24}\\
b(u)^2\approx4.04\cdot 10^{28}\\
u\approx5.10\cdot 10^{19}\end{array}
&
\begin{array}{c}
\verb?PGL(2, 23)?
\leq{\sf S}_{24}\\
b(u)^2\approx4.04\cdot 10^{28}\\
u\approx5.10\cdot 10^{19}\end{array}
&
\begin{array}{c}
\verb?ASL(2, 5):2?
\leq{\sf A}_{25}\\
b(u)^2\approx2.58\cdot 10^{31}\\
u\approx1.29\cdot 10^{21}\end{array}
&
\begin{array}{c}
\verb?(A(5) x A(5)):2^2?
\leq{\sf A}_{25}\\
b(u)^2\approx4.49\cdot 10^{30}\\
u\approx5.38\cdot 10^{20}\end{array}
\\\\\hline\\
\begin{array}{c}
\verb?AGL(2, 5)?
\leq{\sf S}_{25}\\
b(u)^2\approx2.58\cdot 10^{31}\\
u\approx1.29\cdot 10^{21}\end{array}
&
\begin{array}{c}
\verb?(S(5) x S(5)):2?
\leq{\sf S}_{25}\\
b(u)^2\approx4.49\cdot 10^{30}\\
u\approx5.38\cdot 10^{20}\end{array}
&
\begin{array}{c}
\verb?PSigmaL(2, 25)?
\leq{\sf A}_{26}\\
b(u)^2\approx2.58\cdot 10^{33}\\
u\approx1.29\cdot 10^{22}\end{array}
&
\begin{array}{c}
\verb?PGammaL(2, 25)?
\leq{\sf S}_{26}\\
b(u)^2\approx2.58\cdot 10^{33}\\
u\approx1.29\cdot 10^{22}\end{array}
\\\\\hline\\
\begin{array}{c}
\verb?ASL(3, 3)?
\leq{\sf A}_{27}\\
b(u)^2\approx1.99\cdot 10^{34}\\
u\approx3.59\cdot 10^{22}\end{array}
&
\begin{array}{c}
\verb?PSp(4, 3):2?
\leq{\sf A}_{27}\\
b(u)^2\approx1.01\cdot 10^{33}\\
u\approx1.05\cdot 10^{23}\end{array}
&
\begin{array}{c}
\verb?AGL(3, 3)?
\leq{\sf S}_{27}\\
b(u)^2\approx1.99\cdot 10^{34}\\
u\approx3.59\cdot 10^{22}\end{array}
&
\begin{array}{c}
\verb?PGammaL(2, 8)?
\leq{\sf A}_{28}\\
b(u)^2\approx9.31\cdot 10^{38}\\
u\approx1.00\cdot 10^{26}\end{array}
\\\\\hline\\
\begin{array}{c}
\verb?PGammaU(3, 3)?
\leq{\sf A}_{28}\\
b(u)^2\approx1.45\cdot 10^{37}\\
u\approx1.26\cdot 10^{25}\end{array}
&
\begin{array}{c}
\verb?PSp(6, 2)?
\leq{\sf A}_{28}\\
b(u)^2\approx1.01\cdot 10^{33}\\
u\approx1.05\cdot 10^{23}\end{array}
&
\begin{array}{c}
\verb?S(8)?
\leq{\sf A}_{28}\\
b(u)^2\approx1.30\cdot 10^{36}\\
u\approx3.78\cdot 10^{24}\end{array}
&
\begin{array}{c}
\verb?PSL(2, 27):3?
\leq{\sf A}_{28}\\
b(u)^2\approx4.13\cdot 10^{38}\\
u\approx5.17\cdot 10^{24}\end{array}
\end{array}$$}{\tiny $$\begin{array}{cccc}\begin{array}{c}
\verb?PGammaL(2, 27)?
\leq{\sf S}_{28}\\
b(u)^2\approx4.13\cdot 10^{38}\\
u\approx5.17\cdot 10^{24}\end{array}
&
\begin{array}{c}
\verb?29:14?
\leq{\sf A}_{29}\\
b(u)^2\approx1.08\cdot 10^{43}\\
u\approx1.08\cdot 10^{28}\end{array}
&
\begin{array}{c}
\verb?AGL(1, 29)?
\leq{\sf S}_{29}\\
b(u)^2\approx1.08\cdot 10^{43}\\
u\approx1.08\cdot 10^{28}\end{array}
&
\begin{array}{c}
\verb?PSL(2,29)?
\leq{\sf A}_{30}\\
b(u)^2\approx1.08\cdot 10^{43}\\
u\approx1.08\cdot 10^{28}\end{array}
\\\\\hline\\
\begin{array}{c}
\verb?PGL(2,29)?
\leq{\sf S}_{30}\\
b(u)^2\approx1.08\cdot 10^{43}\\
u\approx1.08\cdot 10^{28}\end{array}
&
\begin{array}{c}
\verb?L(3, 5)?
\leq{\sf A}_{31}\\
b(u)^2\approx2.28\cdot 10^{41}\\
u\approx1.10\cdot 10^{28}\end{array}
&
\begin{array}{c}
\verb?L(5, 2)?
\leq{\sf A}_{31}\\
b(u)^2\approx1.54\cdot 10^{40}\\
u\approx4.11\cdot 10^{26}\end{array}
&
\begin{array}{c}
\verb?AGL(1, 31)?
\leq{\sf S}_{31}\\
b(u)^2\approx1.46\cdot 10^{47}\\
u\approx8.84\cdot 10^{30}\end{array}
\\\\\hline\\
\begin{array}{c}
\verb?ASL(5, 2)?
\leq{\sf A}_{32}\\
b(u)^2\approx1.54\cdot 10^{40}\\
u\approx4.11\cdot 10^{26}\end{array}
&
\begin{array}{c}
\verb?PSL(2, 31)?
\leq{\sf A}_{32}\\
b(u)^2\approx1.46\cdot 10^{47}\\
u\approx8.84\cdot 10^{30}\end{array}
&
\begin{array}{c}
\verb?PGL(2, 31)?
\leq{\sf S}_{32}\\
b(u)^2\approx1.46\cdot 10^{47}\\
u\approx8.84\cdot 10^{30}\end{array}
&
\begin{array}{c}
\verb?PGammaL(2, 32)?
\leq{\sf A}_{33}\\
b(u)^2\approx1.31\cdot 10^{48}\\
u\approx2.65\cdot 10^{31}\end{array}
\\\\\hline\\
\begin{array}{c}
\verb?S(8)?
\leq{\sf A}_{35}\\
b(u)^2\approx3.19\cdot 10^{52}\\
u\approx1.28\cdot 10^{35}\end{array}
&
\begin{array}{c}
\verb?PGammaU(3, 3)?
\leq{\sf A}_{36}\\
b(u)^2\approx4.59\cdot 10^{56}\\
u\approx1.53\cdot 10^{37}\end{array}
&
\begin{array}{c}
\verb?PSp(4, 3):2?
\leq{\sf A}_{36}\\
b(u)^2\approx5.11\cdot 10^{53}\\
u\approx3.58\cdot 10^{36}\end{array}
&
\begin{array}{c}
\verb?PSp(6, 2)?
\leq{\sf A}_{36}\\
b(u)^2\approx3.19\cdot 10^{52}\\
u\approx1.28\cdot 10^{35}\end{array}
\\\\\hline\\
\begin{array}{c}
\verb?A(9)?
\leq{\sf A}_{36}\\
b(u)^2\approx2.04\cdot 10^{54}\\
u\approx1.02\cdot 10^{36}\end{array}
&
\begin{array}{c}
\verb?S(9)?
\leq{\sf S}_{36}\\
b(u)^2\approx2.04\cdot 10^{54}\\
u\approx1.02\cdot 10^{36}\end{array}
&
\begin{array}{c}
\verb?(S(6) x S(6)):2?
\leq{\sf S}_{36}\\
b(u)^2\approx5.11\cdot 10^{51}\\
u\approx3.58\cdot 10^{35}\end{array}
&
\begin{array}{c}
\verb?(S(5) x S(5)):2?
\leq{\sf S}_{36}\\
b(u)^2\approx6.62\cdot 10^{54}\\
u\approx1.29\cdot 10^{37}\end{array}
\\\\\hline\\
\begin{array}{c}
\verb?PSL(2,37)?
\leq{\sf A}_{38}\\
b(u)^2\approx4.23\cdot 10^{60}\\
u\approx1.03\cdot 10^{40}\end{array}
&
\begin{array}{c}
\verb?PGL(2,37)?
\leq{\sf S}_{38}\\
b(u)^2\approx4.23\cdot 10^{60}\\
u\approx1.03\cdot 10^{40}\end{array}
&
\begin{array}{c}
\verb?PSp(4, 3):2?
\leq{\sf A}_{40}\\
b(u)^2\approx2.94\cdot 10^{58}\\
u\approx7.86\cdot 10^{42}\end{array}
&
\begin{array}{c}
\verb?PSp(4, 3)?
\leq{\sf A}_{40}\\
b(u)^2\approx1.17\cdot 10^{59}\\
u\approx1.57\cdot 10^{43}\end{array}
\\\\\hline\\
\begin{array}{c}
\verb?PSL(4, 3)?
\leq{\sf A}_{40}\\
b(u)^2\approx3.63\cdot 10^{56}\\
u\approx6.72\cdot 10^{40}\end{array}
&
\begin{array}{c}
\verb?PSp(4, 3):2?
\leq{\sf S}_{40}\\
b(u)^2\approx1.17\cdot 10^{59}\\
u\approx1.57\cdot 10^{43}\end{array}
&
\begin{array}{c}
\verb?PGL(4, 3)?
\leq{\sf S}_{40}\\
b(u)^2\approx3.63\cdot 10^{56}\\
u\approx6.72\cdot 10^{40}\end{array}
&
\begin{array}{c}
\verb?PSL(2,41)?
\leq{\sf A}_{42}\\
b(u)^2\approx1.97\cdot 10^{65}\\
u\approx2.03\cdot 10^{46}\end{array}
\\\\\hline\\
\begin{array}{c}
\verb?PGL(2,41)?
\leq{\sf S}_{42}\\
b(u)^2\approx1.97\cdot 10^{65}\\
u\approx2.03\cdot 10^{46}\end{array}
&
\begin{array}{c}
\verb?PSL(2,43)?
\leq{\sf A}_{44}\\
b(u)^2\approx5.31\cdot 10^{71}\\
u\approx3.34\cdot 10^{49}\end{array}
&
\begin{array}{c}
\verb?PGL(2,43)?
\leq{\sf S}_{44}\\
b(u)^2\approx5.31\cdot 10^{71}\\
u\approx3.34\cdot 10^{49}\end{array}
&
\begin{array}{c}
\verb?S(10)?
\leq{\sf A}_{45}\\
b(u)^2\approx6.98\cdot 10^{67}\\
u\approx1.64\cdot 10^{49}\end{array}
\\\\\hline\\
\begin{array}{c}
\verb?PSp(4, 3):2?
\leq{\sf S}_{45}\\
b(u)^2\approx2.79\cdot 10^{70}\\
u\approx2.30\cdot 10^{51}\end{array}
&
\begin{array}{c}
\verb?(A(7) x A(7)):4?
\leq{\sf A}_{49}\\
b(u)^2\approx7.52\cdot 10^{77}\\
u\approx1.19\cdot 10^{55}\end{array}
&
\begin{array}{c}
\verb?(S(7) x S(7)):2?
\leq{\sf S}_{49}\\
b(u)^2\approx7.52\cdot 10^{77}\\
u\approx1.19\cdot 10^{55}\end{array}
&
\begin{array}{c}
\verb?PSU(3, 5):2?
\leq{\sf A}_{50}\\
b(u)^2\approx3.89\cdot 10^{83}\\
u\approx6.03\cdot 10^{58}\end{array}
\\\\\hline\\
\begin{array}{c}
\verb?Alt(11)?
\leq{\sf A}_{55}\\
b(u)^2\approx6.40\cdot 10^{94}\\
u\approx3.18\cdot 10^{65}\end{array}
&
\begin{array}{c}
\verb?Sym(11)?
\leq{\sf S}_{55}\\
b(u)^2\approx6.40\cdot 10^{94}\\
u\approx3.18\cdot 10^{65}\end{array}
&
\begin{array}{c}
\verb?PSL(3, 7).3?
\leq{\sf A}_{57}\\
b(u)^2\approx4.02\cdot 10^{104}\\
u\approx3.59\cdot 10^{69}\end{array}
&
\begin{array}{c}
\verb?PSL(6, 2)?
\leq{\sf A}_{63}\\
b(u)^2\approx2.32\cdot 10^{109}\\
u\approx4.91\cdot 10^{76}\end{array}
\end{array}$$}{\tiny $$\begin{array}{cccc}\begin{array}{c}
\verb?AGL(6, 2)?
\leq{\sf A}_{64}\\
b(u)^2\approx2.32\cdot 10^{109}\\
u\approx4.91\cdot 10^{76}\end{array}
&
\begin{array}{c}
\verb?Sym(8) wreath Sym(2)?
\leq{\sf A}_{64}\\
b(u)^2\approx3.81\cdot 10^{111}\\
u\approx1.95\cdot 10^{79}\end{array}
&
\begin{array}{c}
\verb?Sym(12)?
\leq{\sf A}_{66}\\
b(u)^2\approx3.90\cdot 10^{116}\\
u\approx5.68\cdot 10^{83}\end{array}
&
\begin{array}{c}
\verb?Sym(13)?
\leq{\sf S}_{78}\\
b(u)^2\approx1.40\cdot 10^{143}\\
u\approx1.81\cdot 10^{105}\end{array}
\\\\\hline\\
\begin{array}{c}
\verb?Alt(9)^2.2^2?
\leq{\sf A}_{81}\\
b(u)^2\approx9.59\cdot 10^{146}\\
u\approx2.20\cdot 10^{109}\end{array}
&
\begin{array}{c}
\verb?Sym(9) wreath Sym(2)?
\leq{\sf S}_{81}\\
b(u)^2\approx9.59\cdot 10^{146}\\
u\approx2.20\cdot 10^{109}\end{array}
&
\begin{array}{c}
\verb?Sym(10) wreath Sym(2)?
\leq{\sf S}_{100}\\
b(u)^2\approx4.22\cdot 10^{195}\\
u\approx3.54\cdot 10^{144}\end{array}
&
\end{array}
$$}
\end{appendices}
\end{document}